\documentclass{article}

\usepackage{amsthm}

\newtheorem{theorem}{Theorem}[section]

\newtheorem{proposition}[theorem]{Proposition}
\newtheorem{remark}[theorem]{Remark}

\usepackage{graphicx, amsmath, color}
\usepackage{amssymb}
\usepackage{booktabs}
\usepackage{float}
\usepackage{psfrag}

\numberwithin{equation}{section}
\usepackage[left=3.5cm,top=4cm,right=3.3cm,bottom=3.3cm]{geometry}

\usepackage[dvips,
  bookmarksopen,
  colorlinks,
  linkcolor = black,
  urlcolor  = blue,
  citecolor = black,
  menucolor = blue
]{hyperref}

\DeclareMathOperator{\hankel}{\mathbf H}
\DeclareMathOperator{\fourier}{\mathbf  F}
\DeclareMathOperator{\sine}{\mathbf  S}
\DeclareMathOperator{\cosine}{\mathbf C}
\DeclareMathOperator{\sign}{sign}
\DeclareMathOperator{\supp}{supp}

\newcommand{\Rrc}{R}
\newcommand{\Fe}{\operatorname{F}}

\newcommand{\R}{\mathbb{R}}
\newcommand{\N}{\mathbb{N}}
\newcommand{\Z}{\mathbb{Z}}

\newcommand{\f}{\mathbf}
\newcommand\set[1]{\left\{#1\right\}}
\newcommand\abs[1]{\left|#1\right|}
\newcommand\norm[1]{\left\|#1\right\|}

\newcommand{\Ge}{{\tt G}}
\newcommand{\Seo}{{\tt S}}
\newcommand{\Feo}{{\tt F}}
\newcommand{\mo}{{\tt m}}
\newcommand{\No}{{\tt N}}
\newcommand{\no}{{\tt n}}

\newcommand{\lo}{{\tt l}}

\newcommand{\req}[1]{(\ref{eq:#1})}

\title{Exact Series Reconstruction in Photoacoustic Tomography with Circular Integrating Detectors}

\begin{document}

\author{Gerhard Zangerl\thanks{Department of Mathematics University of Innsbruck,
Technikerstra\ss{}e~21a, 6020 Innsbruck (\href{mailto:Gerhard.Zangerl@uibk.ac.at}{\tt Gerhard.Zangerl@uibk.ac.at})}
\and  Otmar Scherzer\thanks{Department of Mathematics University of Innsbruck, Technikerstra\ss{}e~21a,
6020 Innsbruck, and Radon Institute of Computational and Applied Mathematics,
Altenberger Stras\ss{}e~69, 4040 Linz, Austria
(\href{mailto:Otmar.Scherzer@uibk.ac.at}{\tt Otmar.Scherzer@uibk.ac.at})}
\and Markus Haltmeier\thanks{Department of Mathematics, University of Innsbruck, Technikerstra\ss{}e~21a,
6020 Innsbruck (\href{mailto:Markus.Haltmeier@uibk.ac.at}{\tt Markus.Haltmeier@uibk.ac.at})}
}

\pagestyle{myheadings} \markboth{Series Reconstruction in PAT with
Circular Integrating Detectors}{G. Zangerl, O. Scherzer, and M.
Haltmeier}

\maketitle


\begin{abstract}
A method for photoacoustic tomography is presented that uses
circular integrals of the acoustic wave for the reconstruction of a
three-dimensional image. Image reconstruction is a two-step process:
In the first step data from a stack of circular integrating
detectors are used to reconstruct the circular projection of the
source distribution. In the second step the inverse circular Radon
transform is applied. In this article we establish inversion
formulas for the first step, which involves an inverse problem for
the axially symmetric wave equation. Numerical results are presented
that show the validity and robustness  of the resulting algorithm.

\smallskip
{\bf Keywords.}  Radon transform; Photoacoustic tomography; photoacoustic microscopy; Hankel transform; image
reconstruction; integrating detectors; axially symmetric; wave equation;

\smallskip
{\bf AMS classifications.} 44A12, 65R32, 35L05,
92C55.

\end{abstract}

\section{Introduction}\label{sec:1}

The principle of Photoacoustic tomography (PAT), also called
Thermoacoustic tomography, is based on the excitation of high
bandwidth acoustic  waves with pulsed non-ionizing electromagnetic
energy inside tissue \cite{FinRak07, KucKun08, PatSch07, SchGraGroHalLen08, XuWan06}. PAT
presents a hybrid imaging technique that combines the advantages of
optical (high contrast) and ultrasound imaging (high resolution).
It has proven  great potential for important medical applications including cancer diagnostics
\cite{KruMilReyKisReiKru00,ManKhaHesSteLee05} and
imaging of vasculature \cite{EseLarLarDeyMotPro02, KolHonSteMul03}.

The common approach uses small conventional piezoelectric
transducers that approximate point detectors to measure the acoustic
pressure  \cite{XuWan06}. Reconstruction algorithms which are based
on the point detector assumption yield images with a spatial
resolution that is essentially limited by the size of the used
piezoelectric transducers \cite{XuWan03}. The size of the detector
can in principle be reduced, but only at the cost of also reducing
the signal-to-noise ratio. In order to overcome this limitation
large size planar or  linear  integrating detectors have been
proposed in \cite{BurHofPalHalSch05, HalSchBurPal04}. Line shaped detectors
integrate the acoustic pressure over its length and can be
implemented by a Mach-Zehnder \cite{PalNusHalBur07} or a Fabry-Perot
interferometer \cite{GruHalPalBur07}, which naturally integrate the
acoustic pressure over the length of a laser beam.

A drawback of linear integrating detectors is that because of
attenuation parts of the detector which are distant from the object
may be less influenced by the pressure wave. Moreover, linear
integrating detectors  do not provide a compact
experimental buildup.
In \cite{ZanSchHal09} so called \emph{circular integrating
detectors} where introduced, which integrate the acoustic pressure
over circles. A circular integrating detector can, similar to a
linear integrating detector, be implemented by an interferometer
where the laser beam is guided along a circle in an optical fiber.
It is free of any aperture effect and can provide a uniformly and
high resolution throughout the image area. Since it is possible to
fabricate optical fibers out of materials which have nearly the same
acoustical density like the fluid in which they are contained
\cite{GruHalPalBur07} no shadowing effects due to the circular
integrating detectors are expected. The use of circular integrating
detectors has been proposed independently in \cite{YanWan07}.
However, their study was limited to two spatial dimensions, where
the circular shaped detector can be used as virtual point detector.

\begin{psfrags}
\psfrag{r}{$r$} \psfrag{z}{$z$} \psfrag{r0}{$r_{\rm det}$}
\psfrag{stack}{stack}
\psfrag{sig}{$\sigma$}
\psfrag{object}{object}
\psfrag{Omega}{$B_{\Rrc}(0)\times \R$}
\begin{figure}
\begin{center}
\includegraphics[width=0.6\textwidth]{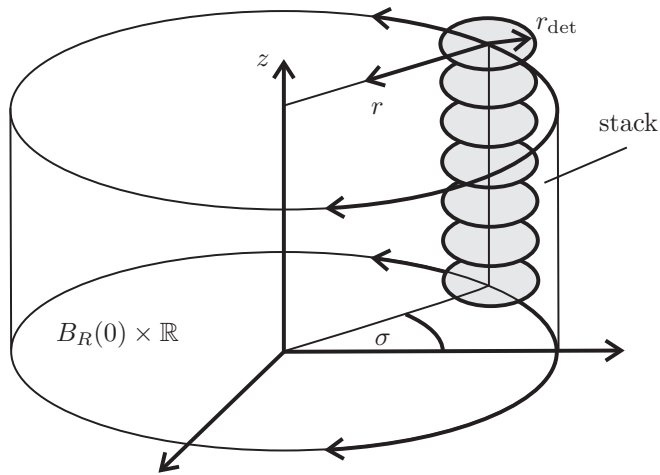}
\caption{Scanning geometry: A stack of circles centered on the boundary of
$B_{\Rrc}(0)\times \R$ is rotated around
the $\f e_3$-axis.}\label{fig:1}
\end{center}
\end{figure}
\end{psfrags}

In \cite{ZanSchHal09} we derived an inversion formula based on the expansion
of the involved functions in special basis-functions. This formula,
is  hard to implement directly due to possible division by a zero (see Subsection \ref{sec:rec}).
In this article we prove two novel exact series solutions that allow for
stable implementation. As a byproduct we obtain a  novel inversion formula for
PAT using point-like detectors on a cylindrical
recording surface (see Remark \ref{r0is0}).

The outline of this article is as follows. In Section
\ref{sec:cyl-geo} we review PAT with circular integrating detectors
and recall the main results of \cite{ZanSchHal09}. In the following
Sections  we will be concerned with derivation and implementation of
the novel stable inversion formulas.

\section{Photoacoustic Tomography with Circular Integrating Detectors}
\label{sec:cyl-geo}

In PAT an acoustic pressure wave $ p(\f x,t)$, inside an object, is
generated by a pulse of non-ionizing electromagnetic radiation. In
the case of spatially constant sound speed, the induced acoustic
pressure satisfies  the initial value problem
\cite{SchGraGroHalLen08,XuWan06}
\begin{align}\label{eq:wave3d}
\partial_t^2 p(\f x,t)
&=  \triangle p(\f x,t) \,, & &(\f x,t)  \in \R^3 \times (0,
\infty)\,,
\\ \label{eq:ini3da}
p(\f x, 0) &= f(\f x) \,, & &\f x  \in \R^3 \,,
\\ \label{eq:ini3db}
\partial_t p(\f x, 0) &= 0 \,, & &\f x  \in \R^3  \,.
\end{align}
PAT is concerned with recovering the initial pressure from
measurements of $p$ outside the support of $f$.

In \cite{ZanSchHal09} it is proposed to measure the acoustic signals
with a stack of parallel circles which is rotated around a single
axis, see Figure \ref{fig:1}. In such a situation, three-dimensional
imaging involves the inversion of the classical circular Radon
transform and the inversion of a reduced wave equation, as outlined
in the following.

Throughout this article it is assumed that $f$ is smooth and
supported in the cylinder $B_\Rrc(0) \times \R$, where $R$ is a
fixed positive number. Let $p(x,t)$ denote the unique solution of
(\ref{eq:wave3d})-(\ref{eq:ini3db}) and, for $\sigma \in S^1$,
define
\begin{align}
  P_\sigma ( z, r, t )
  &:=
  \frac{1}{2\pi}
  \int_{0}^{2\pi}
  p( \f\Phi_\sigma ( z, r, \alpha), t ) \, d\alpha \,,
  & & ( z, r, t) \in \R \times (0, \infty)^2 \,, \label{eq:Psigma} \\
  F_\sigma ( z, r )
  &:=
  \frac{1}{2\pi}
  \int_{0}^{2\pi}
  f( \f\Phi_\sigma ( z, r, \alpha) ) \, d\alpha \,,
  & & (z, r) \in \R \times (0, \infty) \,, \label{eq:Fsigma}
\end{align}
where
\begin{align*}
     \f\Phi_\sigma
     ( z, r, \alpha)
     =
    R \sigma
    +
    \left( r \cos(\alpha), r \sin(\alpha), z \right)^T\,,
    \quad ( z, r, \alpha ) \in \R \times (0,\infty)  \times  [0, 2\pi]\,.
\end{align*}
The stack of circular integrating detectors measures
\begin{equation}\label{eq:data}
    G_\sigma(z, t)
    :=
    P_\sigma( z, r_{\rm det}, t)\,,
    \qquad (\sigma, z, t) \in S^1 \times \R \times (0,\infty)  \,,
\end{equation}
with $r_{\rm det} >0$ denoting the fixed radius of the detectors.

The goal is to recover the unknown initial data $f$ from measured
data $(G_\sigma)_{\sigma \in S^1}$.

\subsection{Two Stage Reconstruction}

Reconstruction  with circular integrating detectors is based on the
following reduction to the axial symmetric wave equation:

\begin{proposition}\label{thm:decomp}
Let $f \in C_0^\infty(B_{\Rrc}(0)\times \R)$ and define $P_\sigma$ and $F_\sigma$,
$\sigma \in S^1$  by (\ref{eq:Psigma}), (\ref{eq:Fsigma}). Then
$P_\sigma$ satisfies the axial symmetric wave equation
\begin{align} \label{eq:wave2d}
    \partial_t^2 P_\sigma(z, r, t)
    &=  \left( r^{-1}\partial_r r\partial_r + \partial_z^2 \right)
    P_\sigma (z, r, t) \,, & & (z, r, t)  \in \R \times (0, \infty)^2\,,
    \\
    \label{eq:ini2da}
    P_\sigma(z, r, 0) &= F_\sigma(z, r)\,, & & (z, r)  \in \R \times (0, \infty) \,,
    \\
    \label{eq:ini2db}
    \partial_t P_\sigma(z, r, 0) &= 0 \,, & & (z, r )  \in \R \times (0, \infty) \,.
\end{align}
Moreover $P_\sigma$ remains bounded as  $r\to 0$.
\end{proposition}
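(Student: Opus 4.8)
The plan is to obtain the axially symmetric wave equation \req{wave2d} directly by integrating the three-dimensional wave equation \req{wave3d} over the parameter $\alpha$, exploiting the fact that averaging over the circle $\alpha\mapsto\f\Phi_\sigma(z,r,\alpha)$ converts the full Laplacian into the axially symmetric (Bessel-type) operator. First I would fix $\sigma\in S^1$ and, writing $p$ for the solution of \req{wave3d}--\req{ini3db}, compute $\triangle_{\f x} p$ in the shifted cylindrical coordinates $\f x = R\sigma + (r\cos\alpha, r\sin\alpha, z)^T$. Since the Laplacian is translation invariant, the shift by $R\sigma$ is harmless, and in the cylindrical coordinates $(r,\alpha,z)$ one has the standard identity $\triangle = r^{-1}\partial_r r \partial_r + r^{-2}\partial_\alpha^2 + \partial_z^2$. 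Integrating this against $\frac{1}{2\pi}\int_0^{2\pi}\,d\alpha$ and using $2\pi$-periodicity in $\alpha$ to kill the $\partial_\alpha^2$-term, I get
\begin{equation*}
  \frac{1}{2\pi}\int_0^{2\pi} (\triangle p)(\f\Phi_\sigma(z,r,\alpha),t)\,d\alpha
  = \left( r^{-1}\partial_r r\partial_r + \partial_z^2\right) P_\sigma(z,r,t)\,,
\end{equation*}
where differentiation under the integral sign is justified by the smoothness of $p$ (which follows from $f\in C_0^\infty$ and finite speed of propagation). Applying the same averaging to the left side of \req{wave3d} and commuting $\partial_t^2$ with the integral yields \req{wave2d}; averaging the initial conditions \req{ini3da}, \req{ini3db} gives \req{ini2da}, \req{ini2db} by the very definitions \req{Psigma}, \req{Fsigma}.

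The remaining assertion, boundedness of $P_\sigma$ as $r\to 0$, I would handle separately. The cleanest argument is that for each fixed $t$ and $z$ the point $\f\Phi_\sigma(z,r,\alpha)$ converges to $R\sigma + (0,0,z)^T$ uniformly in $\alpha$ as $r\to 0$; since $p(\cdot,t)$ is continuous (indeed smooth) on $\R^3$, the integrand in \req{Psigma} is uniformly bounded on a neighborhood of $r=0$, so $P_\sigma(z,r,t)$ converges to $p(R\sigma+(0,0,z)^T,t)$ and in particular stays bounded. One should note that this also shows the Bessel operator $r^{-1}\partial_r r\partial_r$ acting on $P_\sigma$ is well defined and regular at the axis, consistent with the known regularity theory for the axially symmetric wave equation.

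I do not expect a serious obstacle here; the only points requiring a little care are the justification of differentiating under the integral (smoothness plus compact support of $f$, together with finite propagation speed, give local uniform bounds on all derivatives of $p$), and making the computation of $\triangle$ in the translated cylindrical frame explicit enough that the cancellation of the $\partial_\alpha^2$ term is transparent. If one prefers to avoid coordinate computations, an alternative is to verify \req{wave2d} in the weak (distributional) sense by testing against $\varphi(z,r)r\,dr\,dz$ and integrating by parts, but the direct coordinate computation is shorter and I would present that.
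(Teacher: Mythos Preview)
Your proposal is correct and follows essentially the same route as the paper: write the Laplacian in the shifted cylindrical coordinates $\f\Phi_\sigma$, integrate $\Delta p = \partial_t^2 p$ over $\alpha$ so that the $r^{-2}\partial_\alpha^2$ term drops out by periodicity, and read off the initial conditions and boundedness at $r\to 0$ directly from the definitions and the smoothness of $p$. You simply supply more detail (translation invariance, differentiation under the integral, finite propagation speed) than the paper's terse version.
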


\begin{proof}
Equations (\ref{eq:ini2da}), (\ref{eq:ini2db}) and the  boundedness as  $r\to 0$
immediately follow from (\ref{eq:ini3da}), (\ref{eq:ini3db}) and the definitions of
$P_\sigma$ and $F_\sigma$. In the cylindrical coordinates
$\f\Phi_\sigma$, the Laplace operator is given by the well known
expression
\begin{equation*}
\Delta
    =
    r^{-1}\partial_r r\partial_r
    + \partial_z^2
    +
    r^{-2}
    \partial_\alpha^2.
\end{equation*}
Integrating the equation $ \Delta p = \partial^2_tp$ with respect to
$\alpha$ yields (\ref{eq:wave2d}).
\end{proof}

Note that (\ref{eq:wave2d})-(\ref{eq:ini2db}) is uniquely solvable if we require that
its solution remains bounded as $r \to 0$.

\begin{remark}
Proposition \ref{thm:decomp} is the basis of the following two-stage
procedure which reconstructs the initial pressure $f$ in
(\ref{eq:wave3d})-(\ref{eq:ini3db}) from the data
$(G_\sigma)_{\sigma \in S^1}$:

\begin{enumerate} \label{rem:twostep}
\item[(i)]
For $\sigma\in S^1$ (fixed position of the stack of circles)
determine the initial pressure $F_{\sigma}$ of
(\ref{eq:wave2d})-(\ref{eq:ini2db}) using data $G_\sigma$.

Repeating this procedure for every $\sigma$, one obtains a
family of functions $F_\sigma$, $\sigma \in S^1$, corresponding
to averages over circles centered on $\partial
(B_{\Rrc}(0)\times \R)$.

\item[(ii)] \label{i2}
Next one recognizes that for fixed $z= z_0$, the function
\begin{equation*}
   ( \sigma , r )  \mapsto F_\sigma(  z_0,r )
\end{equation*}
is the circular mean transform of $f|_{\set{z= z_0}}$ with
centers on a circle.
For the circular mean transform stable analytic inversion
formulas have been discovered recently  \cite{AgrKucQui07, FinHalRak07,
HalSchBurNusPal07, Kun07, Kun07b}. Exemplarily, one of the formulas in
\cite[Theorem 1.1]{FinHalRak07} reads
\begin{equation}\label{eq:inv-finchi}
    f(\f x',z_0)
    =
    \frac{1}{2\pi}
    \int_{S^1}
        \left(
            \int_{0}^{2 R}
            (\partial_r r \partial_r  F_\sigma)( z_0,r) \log
            \left|
                r^2- | \f x' - R\sigma|^2
            \right| \, dr
        \right)
            d\sigma \,,
    \end{equation}
where $\f x'$ denotes the coordinates in the plane $\R^2 \times \{
z_0  \}$.
\end{enumerate}
\end{remark}

\smallskip
The key task for reconstruction $f$ is to derive stable and fast
algorithms to reconstruct the initial data in (\ref{eq:wave2d})-(\ref{eq:ini2db})
from measurement data $G_\sigma$.  A possible reconstruction method
is based of time reversal (back-propagation)  similar to \cite{BurMatHalPal07, ClaKli07, HriKucNgu08}.
However, the degeneration of $r^{-1}\partial_r r\partial_r$ at $r=0$ and the open detector set
may cause difficulties in such procedures. The inversion approach in this paper is based on  analytic inversion formulas
for reconstructing $F_\sigma$.

\subsection{Exact Inversion Formula}\label{sec:rec}

In the following denote by
\begin{align*}
\sine \set{ \phi } (\omega)
&:=
\int_0^\infty \phi ( t ) \sin( \omega t)   dt \,,
&& \qquad \phi \in L^1((0, \infty)) \,, \omega > 0\,,
\\
\cosine \set{ \phi } (\omega)
&:=
\int_0^\infty \phi ( t ) \cos( \omega t)   dt \,,
&&\qquad \phi \in L^1((0, \infty)) \,, \omega > 0\,,
\\
\fourier \set{\phi}(k)
&
:= \int_\R  \phi ( z ) e^{-ikz} dz\,,
&&\qquad \phi \in L^1(\R) \,, k \in \R \,,
\\
\hankel
\set{ \phi } ( v ) &:=  \int_0^\infty \phi(r) J_0(vr) \ rdr \,,
&&\qquad \phi \in L^1((0, \infty), r^{1/2} dr) \,, v > 0 \,,
\end{align*}
the sine, cosine, Fourier, and Hankel transform, respectively.
(Here $J_0$ is the zero order  Bessel function \cite{AbrSte72}.)
When the above transforms are applied to functions depending on several variables then the
transformed variable is added as subscript, e.g., $\hankel_r \set{
F_\sigma } ( z, v ) =  \int_0^\infty F_\sigma (z, r) J_0(vr) rdr$.
\medskip

\begin{proposition} \label{thm:inv}
Let $f \in C_0^\infty(B_{\Rrc}(0)\times \R)$ and define $F_\sigma(z,r) $ and
$G_\sigma(z,t)$ by (\ref{eq:Fsigma}), (\ref{eq:data}). Then the
relation
\begin{eqnarray}\label{invfo}
    \hankel_r \set{ \fourier_z \set{ F_\sigma }}(k, v)
    =
    \frac{2}{\pi} \frac{\cosine_t \set{ \fourier_z \set{ G_\sigma }} ( k, \sqrt{k^2 + v^2 } )  }{J_0(
    r_{\rm det} v)  \sqrt{k^2+ v^2}}
\end{eqnarray}
holds whenever $J_0(r_{\rm det} v) \neq 0 $.
\end{proposition}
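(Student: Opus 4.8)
The plan is to diagonalize the spatial part of the axially symmetric wave equation of Proposition~\ref{thm:decomp} by means of the Fourier transform $\fourier_z$ and the Hankel transform $\hankel_r$, turning (\ref{eq:wave2d}) into a one--parameter family of ordinary differential equations in $t$, and then to recover the data $G_\sigma=P_\sigma(\cdot,r_{\rm det},\cdot)$ by inverting $\hankel_r$ at the single radius $r_{\rm det}$ and passing to the cosine transform in $t$.

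First I would apply $\hankel_r$ and $\fourier_z$ to (\ref{eq:wave2d}). Since $f\in C_0^\infty(B_{\Rrc}(0)\times\R)$, finite speed of propagation gives that $P_\sigma(\cdot,r,t)$ is compactly supported in $z$ and $P_\sigma(z,\cdot,t)$ is compactly supported in $r$ for each fixed value of the remaining variables, so all boundary terms in the integrations by parts vanish; in particular the term $\left[\,r\,\partial_r P_\sigma\,J_0(vr)\,\right]_{r=0}$ vanishes because $P_\sigma$ stays bounded as $r\to0$ (Proposition~\ref{thm:decomp}). Using $\fourier_z\set{\partial_z^2\phi}(k)=-k^2\fourier_z\set{\phi}(k)$ and $\hankel_r\set{r^{-1}\partial_r r\partial_r\phi}(v)=-v^2\hankel_r\set{\phi}(v)$ — the latter from two integrations by parts and Bessel's equation $J_0''(x)+x^{-1}J_0'(x)+J_0(x)=0$ — equation (\ref{eq:wave2d}) becomes, writing $U(k,v,t):=\hankel_r\set{\fourier_z\set{P_\sigma}}(k,v,t)$,
\begin{equation*}
    \partial_t^2 U(k,v,t)=-(k^2+v^2)\,U(k,v,t),\qquad U(k,v,0)=\hankel_r\set{\fourier_z\set{F_\sigma}}(k,v),\qquad \partial_t U(k,v,0)=0,
\end{equation*}
the initial conditions coming from (\ref{eq:ini2da})--(\ref{eq:ini2db}). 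This elementary ODE has the solution $U(k,v,t)=\hankel_r\set{\fourier_z\set{F_\sigma}}(k,v)\cos\!\big(\sqrt{k^2+v^2}\,t\big)$.

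Next I would invert the Hankel transform in $v$ (the Hankel transform being its own inverse), obtaining
\begin{equation*}
    \fourier_z\set{P_\sigma}(k,r,t)=\int_0^\infty \hankel_r\set{\fourier_z\set{F_\sigma}}(k,v)\,\cos\!\big(\sqrt{k^2+v^2}\,t\big)\,J_0(vr)\,v\,dv,
\end{equation*}
and specialize $r=r_{\rm det}$, which by (\ref{eq:data}) equals $\fourier_z\set{G_\sigma}(k,t)$. Applying $\cosine_t$, interchanging the order of integration, and using $\int_0^\infty\cos(at)\cos(bt)\,dt=\tfrac{\pi}{2}\delta(a-b)$ for $a,b>0$, the $t$--integral produces $\tfrac{\pi}{2}\delta\!\big(\sqrt{k^2+v^2}-\omega\big)$; the scaling rule for the Dirac distribution collapses the $v$--integral at $v=\sqrt{\omega^2-k^2}$ with Jacobian $\omega/\sqrt{\omega^2-k^2}$, and after the substitution $\omega=\sqrt{k^2+v^2}$ one is left with
\begin{equation*}
    \cosine_t\set{\fourier_z\set{G_\sigma}}\!\big(k,\sqrt{k^2+v^2}\big)=\frac{\pi}{2}\,\hankel_r\set{\fourier_z\set{F_\sigma}}(k,v)\,J_0(r_{\rm det}v)\,\sqrt{k^2+v^2}.
\end{equation*}
Dividing by $J_0(r_{\rm det}v)\sqrt{k^2+v^2}$, legitimate precisely when $J_0(r_{\rm det}v)\neq0$, yields (\ref{invfo}).

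The algebra is routine; the work is in the analytic justification. One should verify that $G_\sigma(z,\cdot)$ lies in $L^1((0,\infty))$ so that its cosine transform is defined — here the strong Huygens principle in $\R^3$ makes $p(\f x,\cdot)$, hence $G_\sigma(z,\cdot)$, compactly supported in $t$ — and that $\hankel_r\set{\fourier_z\set{F_\sigma}}(k,\cdot)$ decays rapidly enough in $v$ for the Hankel inversion and the Fubini step to be admissible. The oscillatory integral $\int_0^\infty\cos(at)\cos(bt)\,dt$ must be read distributionally; the cleanest rigorous route is to pair the asserted identity with a Schwartz test function in $\omega$, apply Fubini, and invoke Fourier cosine inversion, so that no manipulation of $\delta$ itself is needed. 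This, together with checking that the two boundary terms in the integration by parts defining $\hankel_r$ genuinely vanish (which is exactly where the boundedness of $P_\sigma$ at $r=0$ from Proposition~\ref{thm:decomp} enters), is the only delicate point.
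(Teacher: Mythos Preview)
Your proof is correct and follows essentially the same route as the paper's: both diagonalize the axially symmetric wave equation via $\fourier_z$ and $\hankel_r$ to obtain $\fourier_z\set{P_\sigma}(k,r,t)=\int_0^\infty \hankel_r\set{\fourier_z\set{F_\sigma}}(k,v)\cos(\sqrt{k^2+v^2}\,t)J_0(vr)\,v\,dv$, set $r=r_{\rm det}$, and then read off the cosine transform in $t$. The only cosmetic difference is that the paper first substitutes $\omega=\sqrt{k^2+v^2}$ and then recognizes the result as an inverse cosine transform, whereas you apply $\cosine_t$ directly and collapse the $v$--integral via the distributional identity $\int_0^\infty\cos(at)\cos(bt)\,dt=\tfrac{\pi}{2}\delta(a-b)$; these are two ways of writing the same computation.
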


\begin{proof}
The zero order Bessel function satisfies $r^{-1} \partial_r r \partial_r J_0(r) = - J_0(r) $ on $(0,\infty)$. Hence the chain rule implies
that $r^{-1} \partial_r r \partial_r J_0(rv) = - v^2 J_0(rv) $ for every $r, v>0$.
Separation of variables shows that the functions
\begin{equation*}
    (z,r,t) \mapsto e^{i k z} \cos\bigl( t \sqrt{k^2 + v^2}  \bigr) J_0(r v),
    \qquad (k,v) \in \R \times (0, \infty)\,,
\end{equation*}
solve (\ref{eq:wave2d}), (\ref{eq:ini2db}). Employing
the initial condition (\ref{eq:ini2da}) and the inversion formulas for the
Fourier and Hankel transforms it follows that the unique bounded solution of (\ref{eq:wave2d})-(\ref{eq:ini2db})
is given by
\begin{equation} \label{sol2d}
P_\sigma (r,z,t) := \frac{1}{2\pi} \int_\R \int_0^\infty \bar F(k,v)
e^{ikz} J_0(rv) \cos \bigl(t \sqrt{k^2+ v^2}\bigr) v dv dk\,,
\end{equation}
with $\bar F (k,v): = \hankel_r \set{ \fourier_z \set{ F_\sigma
}}(k, v)$.

Substituting $\omega = \sqrt{k^2+v^2}$ in (\ref{sol2d}) and putting
$r=r_{\rm det}$ afterwards, leads to
\begin{equation}\label{press:data}
G_{\sigma}(z,t) = \frac{1}{2\pi} \int_\R \left(
\int_{\abs{k}}^\infty J_0 \bigl( r_{\rm det} \sqrt{\omega^2-k^2}
\bigr)    \omega  \bar F \bigl(k,\sqrt{\omega^2 - k^2}
\bigr)\cos(\omega t)  d\omega\right)  e^{ik z}  dk
\end{equation}
The inversion formulas for the Fourier and Cosine transforms now
imply that
\begin{equation*}
\cosine_t \set{ \fourier_z \set{ G_\sigma }}(k,\omega) =
\frac{\pi}{2} \left\{
  \begin{array}{ll}
    J_0\bigl( r_{\rm det} \sqrt{\omega^2-k^2} \bigr) \omega \bar F \bigl( k, \sqrt{\omega^2 - k^2}\bigr), &
\text{ if }  \omega > k \,,\\
    0, & \text{ otherwise} \,.
  \end{array}
\right.
\end{equation*}
Solving the last equation for $\bar F$ shows (\ref{invfo}).
\end{proof}

\noindent
Proposition \ref{thm:inv} implies that $F_\sigma$ can be reconstructed
from data $G_\sigma$ as follows:

\begin{enumerate}
\item[(i)]
The data $G_\sigma$ are Fourier and cosine transformed,
yielding to $\cosine_t \set{ \fourier_z \set{ G_\sigma }}$.

\item[(ii)]
According to (\ref{invfo}), $\cosine_t \set{ \fourier_z \set{ G_\sigma }}$
is mapped to $\hankel_r \set{ \fourier_z \set{ F_\sigma }}$.

\item[(iii)]
Finally, application of the inverse  Fourier and Hankel transforms yields
\[
    F_\sigma(z, r)
    =
   \frac{1}{2\pi} \int_\R
   \int_0^\infty
   \hankel_r \set{ \fourier_z \set{ F_\sigma }}(k, v)
   J_0(r v) e^{i k z} \ v dv dk \,.
   \]
\end{enumerate}

\begin{remark}[Instability of (\ref{invfo})]
Inversion formula (\ref{invfo}) is not defined when $J_0(r_{\rm det} v)$
equals $0$. From the proof of the above theorem it is clear that for
exact data
\begin{equation}
\cosine_t \{ \fourier_z \{ G_\sigma \}\}\bigl(k,\sqrt{v_n^2 + k^2}\bigr) = 0\,,
\qquad n \in \N \,,
\end{equation}
with $(v_n)_{n\in \N}$ denoting the zeros  of $r \mapsto J_0( r_{\rm det} v)$.
In practice, however, only noisy (approximately measured)
data $G_\sigma^{\delta}\simeq G_\sigma$ are available.
In general,
\[
    \cosine_t \{ \fourier_z \{G_\sigma^\delta \}\}
\bigl(k,\sqrt{v_n^2 + k^2}\bigr) \neq 0 \,.
\]
It is therefore difficult to stably evaluate the quotient in
(\ref{invfo}) in practice.
\end{remark}

\begin{psfrags}
\psfrag{R}{\footnotesize$R$}
\psfrag{r}{\footnotesize$r_0$}
\psfrag{R-r}{\footnotesize$R-r_{\rm det}$}
\begin{figure}[tbh!]
\begin{center}
\includegraphics[width=0.45\textwidth]{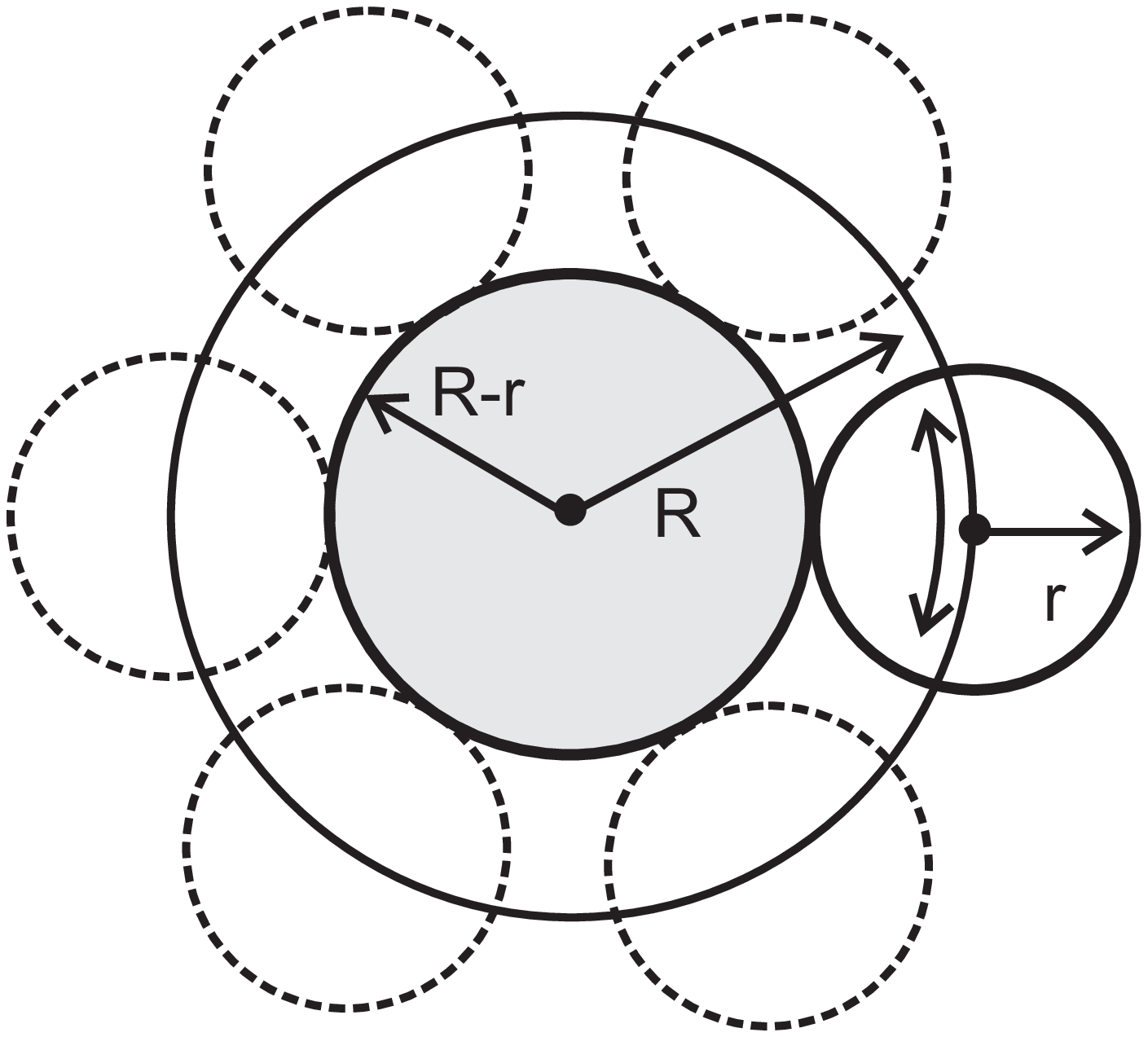}\quad
\includegraphics[width=0.45\textwidth]{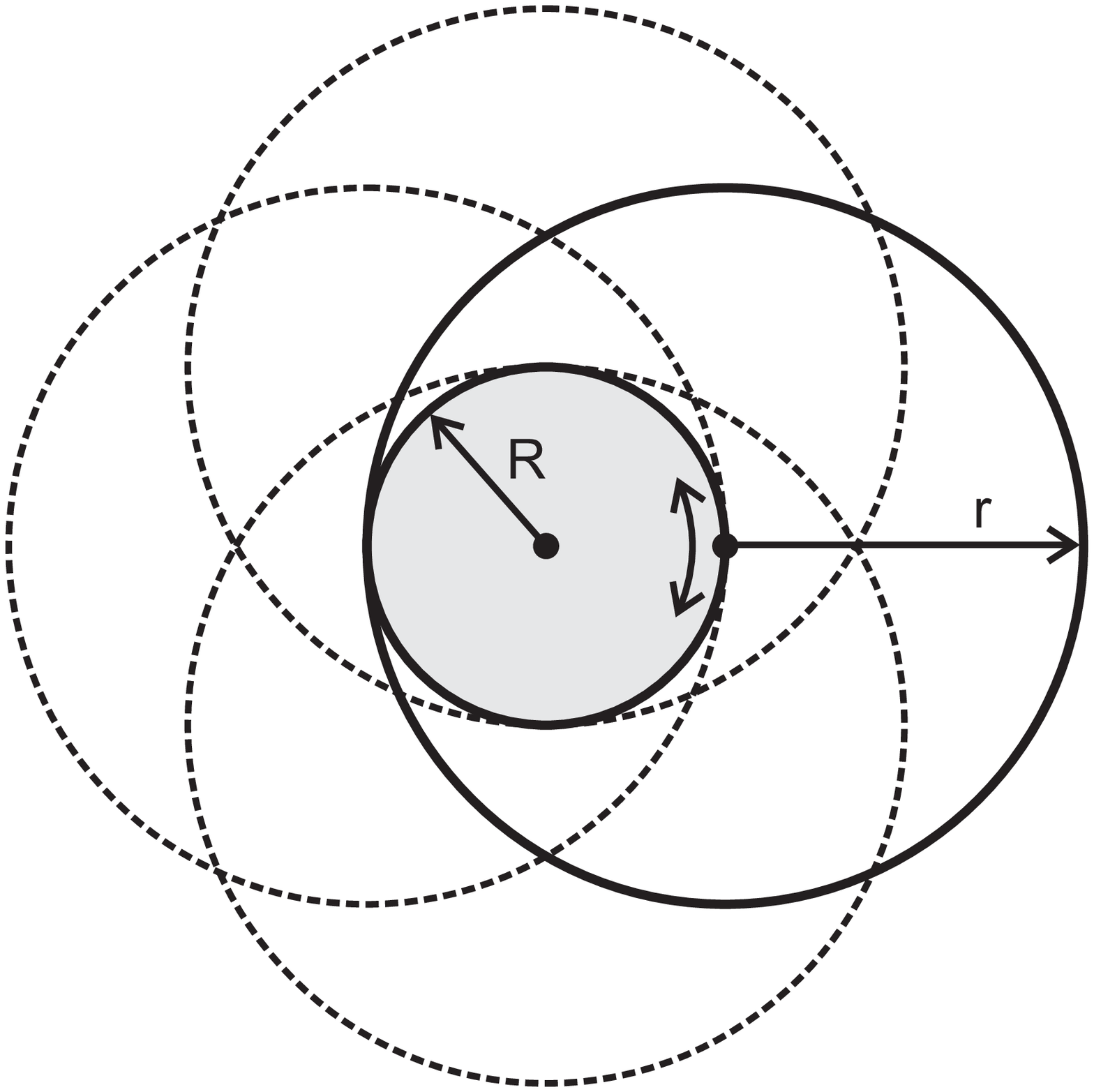}
\caption{Cross section of experimental buildup when $r_{\rm det} < \Rrc$
(left) and $r_{\rm det} \geq 2R$ (right). In both case object has to be
supported in the gray disc.}\label{fig:big+small}
\end{center}
\end{figure}
\end{psfrags}

Because the acoustic pressure is measured outside of the investigated object,
only the following two situations occur in practical applications,
see Figure \ref{fig:big+small}:
\begin{itemize}
  \item[(i)]
  The stack of circles is strictly outside the object.
  In this case $r_{\rm det} < \Rrc$ and  $\supp(f) \subset B_{\Rrc-r_{\rm det}}(0) \times \R$.

  \item[(ii)]
  The object is enclosed in the stack of circles. In this
  case $r_{\rm det} \geq 2 \Rrc$.
\end{itemize}
For the case $r_{\rm det} \geq 2 \Rrc$ we will provide two stable
formulas based on expansions in bases of special functions. The case
$r_{\rm det} < \Rrc$ turns out to be harder, we currently do not
have a stable alterative to (\ref{invfo}). In this case the function
$\fourier \set{F_\sigma}(k, \cdot)$ is not supported in the interval
$(0, r_{\rm det})$ and thus it cannot be expanded into a Fourier
Bessel series which is crucial in the proves of theorems
\ref{ser:expbR} and \ref{thm:inv2}. However, in the limiting case
$r_{\rm det} \ll \Rrc$ a stable reconstruction formula is obtained,
compare with Remark \ref{r0is0} below.

\section{Stable Inversion Formulas}
\label{sec:rec-stable}

In the following we
fix $f \in C_0^\infty(B_R(0)\times \R)$, $\sigma \in S^1$, define $F_\sigma$,
$G_\sigma$  by (\ref{eq:Fsigma}), (\ref{eq:data}), and let
$(v_n)_{n\in \N}$ denote the zeros of the function $v \mapsto J_0(
r_{\rm det} v)$.

\medskip
Our first stable inversion formula is as follows:
\begin{theorem}[The D'Hospital trick]
\label{ser:expbR} Assume $r_{\rm det} \geq 2 \Rrc$.
Then
\begin{equation}
F_\sigma(z,r) =
 \frac{2}{\pi^2 r_{\rm det}^3}
 \int_\R \left( \sum_{n  \in \mathbb{N} }
\frac{ \sine_t\set{t \fourier_z\set{G_{\sigma} }} \bigl(k ,\sqrt{k^2+ v_n^2
}\bigr)v_n }{k^2 + v_n^2} \frac{J_0(r v_n)}{J_1(r_{\rm det} v_n)^3}
\right)
e^{ikz} dk
\label{eq:stabf} 
\end{equation}
for any $(z,r) \in \R \times (0, \infty)$.
\end{theorem}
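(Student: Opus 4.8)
The plan is to reconstruct $F_\sigma$ by expanding its Fourier transform $\fourier_z\set{F_\sigma}(k,\cdot)$ into a Fourier--Bessel series on $(0,r_{\rm det})$ and recovering the coefficients of that series from the data via the relation (\ref{invfo}) of Proposition~\ref{thm:inv}, evaluated at the critical wavenumbers $v=v_n$ by l'Hospital's rule --- which is what gives the theorem its name.

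First I would record the geometric fact that, under the hypothesis $r_{\rm det}\ge 2\Rrc$, the profile $r\mapsto F_\sigma(z,r)$ is compactly supported in $(0,r_{\rm det})$: every point of the circle $\alpha\mapsto\f\Phi_\sigma(z,r,\alpha)$ lies at distance at least $\abs{r-\Rrc}$ from the $\f e_3$-axis, so for $r\ge 2\Rrc$ this circle lies outside the cylinder $B_\Rrc(0)\times\R$ and the average (\ref{eq:Fsigma}) defining $F_\sigma(z,r)$ vanishes; as $\supp f$ is in fact contained in a strictly smaller closed cylinder, $F_\sigma(z,\cdot)$ is supported in a compact subinterval of $(0,2\Rrc)\subseteq(0,r_{\rm det})$. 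The Fourier transform in $z$ preserves this support, so for each $k\in\R$ the function $\phi_k:=\fourier_z\set{F_\sigma}(k,\cdot)$ is smooth and compactly supported in $(0,r_{\rm det})$, hence has the Fourier--Bessel expansion $\phi_k(r)=\sum_{n\in\N}c_n(k)J_0(v_n r)$ in the complete orthogonal system $\set{J_0(v_n\,\cdot)}_{n\in\N}$ of $L^2\bigl((0,r_{\rm det}),r\,dr\bigr)$. From $\int_0^{r_{\rm det}}J_0(v_n r)^2\,r\,dr=\tfrac12\,r_{\rm det}^2\,J_1(r_{\rm det}v_n)^2$ and the vanishing of $\phi_k$ outside $(0,r_{\rm det})$ one obtains
\begin{equation*}
  c_n(k)=\frac{2}{r_{\rm det}^2\,J_1(r_{\rm det}v_n)^2}\int_0^{r_{\rm det}}\phi_k(r)J_0(v_n r)\,r\,dr=\frac{2}{r_{\rm det}^2\,J_1(r_{\rm det}v_n)^2}\,\hankel_r\set{\fourier_z\set{F_\sigma}}(k,v_n)\,.
\end{equation*}

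The remaining and central task is to express $\hankel_r\set{\fourier_z\set{F_\sigma}}(k,v_n)$ through $G_\sigma$. For $v\ne v_n$, (\ref{invfo}) writes this as $\tfrac2\pi N(v)/D(v)$ with $N(v):=\cosine_t\set{\fourier_z\set{G_\sigma}}\bigl(k,\sqrt{k^2+v^2}\bigr)$ and $D(v):=J_0(r_{\rm det}v)\sqrt{k^2+v^2}$. The denominator vanishes at $v_n$ because $J_0(r_{\rm det}v_n)=0$, and since $\hankel_r\set{\fourier_z\set{F_\sigma}}(k,\cdot)$ is smooth --- here the assumption $f\in C_0^\infty$ enters --- hence finite at $v_n$, the numerator must vanish there too (this is the cancellation noted in the remark on the instability of (\ref{invfo})). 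So the quotient extends continuously to $v_n$ with value equal to the limit, which I would compute by l'Hospital's rule using the identities $\partial_\omega\cosine_t\set{\psi}(\omega)=-\sine_t\set{t\,\psi(t)}(\omega)$ and $J_0'=-J_1$ and the chain rule for $v\mapsto\sqrt{k^2+v^2}$; here $D'(v_n)=-r_{\rm det}J_1(r_{\rm det}v_n)\sqrt{k^2+v_n^2}\ne 0$, since $J_1$ does not vanish at the zeros of $J_0$. The result is
\begin{equation*}
  \hankel_r\set{\fourier_z\set{F_\sigma}}(k,v_n)=\frac{2}{\pi}\,\frac{\sine_t\set{t\,\fourier_z\set{G_\sigma}}\bigl(k,\sqrt{k^2+v_n^2}\bigr)\,v_n}{r_{\rm det}\,J_1(r_{\rm det}v_n)\,(k^2+v_n^2)}\,.
\end{equation*}
Feeding this into $c_n(k)$, then the $c_n(k)$ into $\phi_k(r)=\sum_n c_n(k)J_0(v_n r)$, and finally applying the Fourier inversion $F_\sigma(z,r)=\tfrac1{2\pi}\int_\R\phi_k(r)e^{ikz}\,dk$, the constants $\tfrac1{2\pi}$, $\tfrac2\pi$, $\tfrac{2}{r_{\rm det}^2}$ and $\tfrac1{r_{\rm det}}$ combine to $\tfrac{2}{\pi^2 r_{\rm det}^3}$ and (\ref{eq:stabf}) follows.

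The main obstacle is the analytic justification of the two limit operations. The l'Hospital step needs $\cosine_t\set{\fourier_z\set{G_\sigma}}(k,\cdot)$ to be $C^1$ near $\sqrt{k^2+v_n^2}$ with the stated derivative; this follows from the decay of $G_\sigma(z,\cdot)$ and the integrability of its first $t$-moment, guaranteed by $f\in C_0^\infty(B_\Rrc(0)\times\R)$ together with finite speed of propagation for (\ref{eq:wave2d})--(\ref{eq:ini2db}). Assembling the series requires the Fourier--Bessel expansion of $\phi_k$ to converge, and uniformly enough in $k$, to justify swapping $\sum_{n\in\N}$ with $\int_\R(\cdot)e^{ikz}\,dk$ --- again a routine consequence of the smoothness and compact support of $f$. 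The geometric support statement of the first step and the l'Hospital computation of the second are the conceptual core of the proof.
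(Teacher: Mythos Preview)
Your proof is correct and follows essentially the same route as the paper: Fourier--Bessel expansion of $\fourier_z\set{F_\sigma}(k,\cdot)$ on $(0,r_{\rm det})$, recovery of the coefficients from (\ref{invfo}) via l'Hospital at the zeros $v_n$, and Fourier inversion in $z$. You supply more detail than the paper does on the support argument and on the analytic justifications (differentiability of the cosine transform, interchange of sum and integral), but the structure and computations match exactly.
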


\begin{proof} The assumptions $f \in C_0^\infty(B_R(0)\times \R)$ and
$r_{\rm det} \geq 2 \Rrc$ imply  that $ \fourier_z\{ F_{\sigma}
\}(k, \cdot)$ is compactly supported in $(0,r_{\rm det})$. It can
therefore be expanded  in a Fourier Bessel series \cite{Sne72}
\begin{equation}\label{eq:fsera}
\fourier_z \set{F_\sigma}(k,r) = \frac{2}{r_{\rm det}^2} \sum_{n \in \mathbb{N}}
\hankel_r\set{\fourier_z\set{F_{\sigma} }}
(k, v_n)  \frac{J_0(r v_n)}{J_1( r_{\rm det} v_n )^2} \,.
\end{equation}
According to (\ref{invfo}) we have
\begin{equation*}
\hankel_r\set{\fourier_z\set{F_{\sigma} }}(k,v)   =\frac{2}{\pi}
\frac{
\cosine_t\set{\fourier_z\set{G_{\sigma} }}\bigl(k ,\sqrt{k^2+ v^2}\bigr)}{J_0(r_{\rm det}v) \sqrt{k^2+
v^2}} \,, \qquad v \not\in \set{v_n: n\in\N} \,.
\end{equation*}
Applying the rule of D'Hospital gives
\begin{align}\nonumber
\hankel_r\set{\fourier_z\set{F_{\sigma} }}(k,v_n)
&=
\frac{2}{\pi}\lim_{v \rightarrow v_n}
\frac{
\partial/\partial v \left[\cosine_t\set{\fourier_z\set{G_{\sigma} }}\bigl(k ,\sqrt{k^2+ v^2}\bigr)\right]}
{\partial/\partial v  \left[J_0(r_{\rm det}v) \sqrt{k^2+ v^2}\right]}
\\\nonumber
&= \frac{2}{\pi}\lim_{v \rightarrow v_n} \frac{\sine_t\set{t \fourier_z\set{G_{\sigma}
}} \bigl(k ,\sqrt{k^2+ v^2}\bigr) \frac{v}{\sqrt{k^2 + v^2}}}{ J_1( r_{\rm det} v)
r_{\rm det} \sqrt{k^2 + v^2 }}\\
&= \frac{2}{\pi r_{\rm det}}\frac{\sine_t\set{t \fourier_z\set{G_{\sigma}
}} \bigl(k ,\sqrt{k^2+ v_n^2}\bigr) v_n}{ J_1( r_{\rm det} v_n)
 \bigl( k^2 + v_n^2 \bigr)} \,. \label{hospa}
\end{align}
Inserting (\ref{hospa}) in (\ref{eq:fsera}) and using the Fourier
inversion formula shows (\ref{eq:stabf}).
\end{proof}

\begin{remark}\label{asymex}[Stabilty of \req{stabf}]
From the asymptotic approximation (see \cite{AbrSte72})
\begin{eqnarray*}
J_m (x) \simeq \sqrt{ \frac{2}{\pi x} } \cos \left(x- \frac{m
\pi}{2} - \frac{ \pi}{4} \right) \,, \quad \text{for } x \to
\infty\,,
\end{eqnarray*}
of the $m$-th order Bessel function it follows
that
\[
v_n \simeq  \frac{\pi (n + 1/4)}{r_{\rm det}}\,, \quad
| J_1(r_{\rm det} v_n)|
\simeq
\sqrt{\frac{2}{ \pi r_{\rm det} v_n   } }
\,, \quad
\text{ for }n \to \infty \,.
\]
Moreover the summands  in \req{stabf} take  the asymptotic form
\begin{align*}
    & \abs{\sine_t\set{t \fourier_z\set{G_{\sigma} }} \bigl(k ,\sqrt{k^2+ v_n^2}\bigr)
    \frac{v_n }{k^2 + v_n^2} \frac{  J_0(r v_n) }{ J_1(r_{\rm det} v_n)^3} }
    \\
    & \qquad
    \simeq
    \abs{\sine_t\set{t \fourier_z\set{G_{\sigma} }} \bigl(k ,\sqrt{k^2+ v_n^2}\bigr)}
    \frac{1 }{v_n}
    \frac{ \abs{ \cos \left(v_n - \frac{ \pi}{4} \right)} ( 2   / (\pi r v_n) )^{1/2}   }{ (2  / ( \pi r_{\rm det} v_n))^{3/2} }
    \\
    & \qquad
    \leq
    \frac{r^{1/2}}{4 r_{\rm det}^{3/2} }
    \abs{\sine_t\set{t \fourier_z\set{G_{\sigma} }} \bigl(k ,\sqrt{k^2+ v_n^2}\bigr)} \,.
\end{align*}
Consequently,  the parts  that do not depend on  the data $G_\sigma$
are bounded, and \req{stabf} can be implemented in stable way.
\end{remark}

\medskip
In the sequel we derive an additional inversion
formula that circumvents the division by zero problem. In fact, our
formula will be a consequence of the following result
derived in \cite{XuXuWan02}.

\begin{proposition}
Let $p$ denote the unique solution of
(\ref{eq:wave3d})-(\ref{eq:ini3db}) and let $f_\sigma^m$ and
$g_\sigma^m$ denote the Fourier coefficients of $f( \f\Phi_\sigma(
z, r, \alpha) )$ and
\[
    g_\sigma(\alpha, z, t)
    :=
    \left\{
      \begin{array}{ll}
        p ( \f\Phi_\sigma(z, r, \alpha) , t ), & \text{ for } t > 0 \,,\\
        0 , & \text{ otherwise} \,,
      \end{array}
    \right.
\]
with respect to $\alpha$. Then
\begin{equation}\label{eq:xuxuwang}
    \hankel_r \set{\fourier_t \set{ f_\sigma^m  }}(k, v) =
    \frac{2}{\pi} \frac{\fourier_t \set{\fourier_z \set{g_\sigma^m}} ( k, \sqrt{k^2 + v^2 } )  }{H^{2}_n(
    r_{\rm det} v)  \sqrt{k^2+ v^2}}
\,, \qquad (k, v) \in \R \times (0, \infty) \,,
\end{equation}
with $H^{2}_m$ denoting the $m$-th order second kind Hankel function.
\end{proposition}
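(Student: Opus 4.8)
This is the frequency--domain identity of \cite{XuXuWan02} written in the present notation; here is the route I would take. The plan is to decompose the three--dimensional problem \req{wave3d}--\req{ini3db} into its angular Fourier modes. In the cylindrical coordinates $\f\Phi_\sigma$ the Laplacian is $r^{-1}\partial_r r\partial_r + \partial_z^2 + r^{-2}\partial_\alpha^2$, so taking the $m$--th Fourier coefficient in $\alpha$ shows that $f_\sigma^m$ is the initial datum of a wave equation in $(z,r,t)$ whose spatial part is the order--$m$ Bessel operator $r^{-1}\partial_r r\partial_r - m^2 r^{-2} + \partial_z^2$ (for $m=0$ this is Proposition~\ref{thm:decomp}). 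Since $g_\sigma$ is the \emph{causal} extension of the corresponding pressure mode (zero for $t\le0$), a short distributional computation that uses $\partial_t p(\cdot,0)=0$ gives
\[
  \partial_t^2 g_\sigma^m - \bigl(r^{-1}\partial_r r\partial_r - m^2 r^{-2} + \partial_z^2\bigr)g_\sigma^m = f_\sigma^m\,\delta'(t)\,.
\]
Applying $\fourier_z$ and $\fourier_t$ and abbreviating $\mu:=\sqrt{\omega^2-k^2}$, $F:=\fourier_z\set{f_\sigma^m}$, $\widehat g:=\fourier_t\set{\fourier_z\set{g_\sigma^m}}$, this turns into the inhomogeneous Bessel equation
\[
  r^{-1}\partial_r r\partial_r\,\widehat g - m^2 r^{-2}\widehat g + \mu^2\widehat g = -\,i\omega\,F(k,r)\,,\qquad r>0\,.
\]

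I would then solve this ODE on $(0,\infty)$ by its Green's function, built from the two canonical homogeneous solutions $J_m(\mu r)$, which is bounded at $r=0$, and $H^{2}_m(\mu r)$, which satisfies the outgoing radiation condition. With the paper's $\fourier_t$--kernel $e^{-i\omega t}$ it is $H^{2}_m$, not $H^{1}_m$, that is selected: the frequency--domain kernel of the \emph{retarded} fundamental solution of $\partial_t^2-\Delta$ decays radially like $e^{-i\mu r}$. The Wronskian $W[J_m,H^{2}_m](x)=-2i/(\pi x)$ then fixes the Green's function $\mathcal G_\mu(r,r')=\tfrac{i\pi}{2}J_m(\mu r_<)H^{2}_m(\mu r_>)$, so that
\[
  \widehat g(k,r,\omega) = -\,i\omega\int_0^\infty \mathcal G_\mu(r,r')\,F(k,r')\,r'\,dr'\,.
\]
Evaluating at $r=r_{\rm det}$ and using that $f(\f\Phi_\sigma(\cdot,r',\cdot))$ vanishes once $r'$ exceeds the enclosing radius $2\Rrc$ of $\supp f$ --- hence for $r'\ge r_{\rm det}$ whenever the detector cylinder encloses the object --- the contribution of $r'>r_{\rm det}$ drops out, so $r_<=r'$ and $r_>=r_{\rm det}$ throughout and
\[
  \widehat g(k,r_{\rm det},\omega) = \tfrac{\pi\omega}{2}\,H^{2}_m(\mu r_{\rm det})\int_0^\infty F(k,r')\,J_m(\mu r')\,r'\,dr'\,.
\]
The remaining integral is the order--$m$ Hankel transform $\hankel_r\set{\fourier_z\set{f_\sigma^m}}(k,\mu)$ (here $\hankel$ denotes the order--$m$ transform, as in \cite{XuXuWan02}); substituting $\omega=\sqrt{k^2+v^2}$, so that $\mu=v$, and solving for it gives \req{xuxuwang}.

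The main obstacle is bookkeeping the convention--dependent factors of $i$: one has to pin down the radiation condition to obtain $H^{2}_m$ together with the matching Wronskian, so that the $i$'s cancel and the overall constant becomes the real number $2/\pi$. A secondary point is rigour --- the distributional identity involving $\delta'$, the interchange of the angular Fourier series with the $(z,t)$--transforms, and convergence of the Hankel and Fourier integrals --- which is harmless for $f\in C_0^\infty(B_\Rrc(0)\times\R)$ but should be recorded. Finally, the clean quotient in \req{xuxuwang} implicitly needs the detector cylinder to enclose $\supp f$, i.e.\ $r_{\rm det}\ge 2\Rrc$ in the application; otherwise $f_\sigma^m(\cdot,r')$ fails to be supported in $(0,r_{\rm det})$, the leftover integral over $r'>r_{\rm det}$ does not vanish, and the identity breaks down --- which is exactly the difficulty noted in Subsection~\ref{sec:rec} for $r_{\rm det}<\Rrc$.
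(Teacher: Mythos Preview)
Your argument is correct. The paper does not actually prove this proposition---it is quoted from \cite{XuXuWan02}---but the subsequent Remark sketches the route taken there: one starts from the retarded Green's function of the three--dimensional wave equation and invokes the cylindrical expansion \req{green} of $e^{-i\omega|\cdot|}/|\cdot|$ as a known identity, then reads off the $m$--th angular mode. Your approach is the mode--by--mode counterpart: you first pass to the $m$--th Fourier coefficient, obtain the inhomogeneous Bessel ODE via the $\delta'(t)$ source coming from the causal extension, and then solve that ODE with its one--dimensional Green's function $\tfrac{i\pi}{2}J_m(\mu r_<)H^{2}_m(\mu r_>)$. The two are really the same computation viewed from opposite ends---the expansion \req{green} \emph{is} the statement that the 3D Helmholtz Green's function decomposes into these 1D Bessel Green's functions---but your version is more self--contained, since it does not require \req{green} as an external input and makes the selection of $H^{2}_m$ over $H^{1}_m$ transparent through the causality/radiation argument. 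Your observation that the clean quotient needs $\supp f_\sigma^m(\cdot,r')\subset(0,r_{\rm det})$, i.e.\ $r_{\rm det}\ge 2\Rrc$, is exactly the hypothesis ``$r_{\rm det}>r$'' recorded in the paper's Remark and used in Theorem~\ref{thm:inv2}. One small notational point: the paper's symbol $\hankel$ is defined only for order zero, so in \req{xuxuwang} it should be read as the order--$m$ Hankel transform, as you note.
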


\bigskip
Now the second stable inversion formula can be stated as follows:

\begin{theorem} \label{thm:inv2}
Assume $r_{\rm det} \geq 2 \Rrc$.
Then
\begin{equation}
F_\sigma(z,r) =
 \frac{2}{\pi^2 r_{\rm det}^2}
 \int_\R \left( \sum_{n  \in \mathbb{N} }
\frac{ \fourier_t \set{\fourier_z \set{G_\sigma}}
    ( k, \sqrt{k^2 + v_n^2 } )  }{H^{2}_0(
    r_{\rm det} v_n)  \sqrt{k^2+ v_n^2}}\frac{J_0(r v_n)}{J_1(r_{\rm det} v_n)^2}
\right)
e^{ikz} dk \,.
\label{eq:stabf2} 
\end{equation}
Here $G_\sigma$ is extended by $G_\sigma(z,t) = 0$ for $t<0$.
\end{theorem}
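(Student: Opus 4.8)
The proof follows the same scheme as that of Theorem~\ref{ser:expbR}, with the unstable identity (\ref{invfo}) replaced by the $m=0$ case of (\ref{eq:xuxuwang}). First I would note, exactly as at the beginning of the proof of Theorem~\ref{ser:expbR}, that $f\in C_0^\infty(B_R(0)\times\R)$ together with $r_{\rm det}\ge 2R$ force $\fourier_z\set{F_\sigma}(k,\cdot)$ to be compactly supported in $(0,r_{\rm det})$ for every fixed $k\in\R$ (the circle of radius $r$ about $R\sigma+(0,0,z)^T$ meets $B_R(0)\times\R$ only for $0<r<2R$). Hence $\fourier_z\set{F_\sigma}(k,\cdot)$ admits the Fourier--Bessel expansion (\ref{eq:fsera}),
\[
  \fourier_z\set{F_\sigma}(k,r)
  =\frac{2}{r_{\rm det}^2}\sum_{n\in\N}
   \hankel_r\set{\fourier_z\set{F_\sigma}}(k,v_n)\,\frac{J_0(rv_n)}{J_1(r_{\rm det}v_n)^2}\,.
\]

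The second ingredient is the identification of the data with zeroth Fourier modes: by (\ref{eq:Fsigma}) the zeroth Fourier coefficient in $\alpha$ of $f(\f\Phi_\sigma(z,r,\alpha))$ is $F_\sigma(z,r)$, and by (\ref{eq:Psigma})--(\ref{eq:data}) the zeroth Fourier coefficient of $g_\sigma$ taken at $r=r_{\rm det}$ is $P_\sigma(z,r_{\rm det},t)=G_\sigma(z,t)$, the convention $G_\sigma(z,t)=0$ for $t<0$ being exactly the one in the definition of $g_\sigma$. Thus (\ref{eq:xuxuwang}) with $m=0$, evaluated at $v=v_n$, reads
\[
  \hankel_r\set{\fourier_z\set{F_\sigma}}(k,v_n)
  =\frac{2}{\pi}\,
   \frac{\fourier_t\set{\fourier_z\set{G_\sigma}}\bigl(k,\sqrt{k^2+v_n^2}\bigr)}
        {H^{2}_0(r_{\rm det}v_n)\,\sqrt{k^2+v_n^2}}\,.
\]
The essential point — the reason this is a stable substitute for (\ref{invfo}) — is that the denominator here never vanishes: $H^2_0=J_0-iY_0$, and the real zeros of $J_0$ and $Y_0$ strictly interlace (their Wronskian $\tfrac{2}{\pi x}$ never vanishes), so at a zero $v_n$ of $v\mapsto J_0(r_{\rm det}v)$ one has $H^2_0(r_{\rm det}v_n)=-iY_0(r_{\rm det}v_n)\neq0$.

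It then remains to insert this evaluation into the Fourier--Bessel series, to combine the constants $\tfrac{2}{r_{\rm det}^2}\cdot\tfrac{2}{\pi}=\tfrac{4}{\pi r_{\rm det}^2}$, and to apply the inverse Fourier transform in $z$, which supplies the factor $\tfrac1{2\pi}$ and produces the prefactor $\tfrac{2}{\pi^2 r_{\rm det}^2}$ in (\ref{eq:stabf2}). The one genuinely technical step, which I expect to be the main obstacle, is to justify interchanging the $z$-integral with the infinite sum, i.e.\ convergence of the series in (\ref{eq:stabf2}). Here I would argue as in Remark~\ref{asymex}: using the Bessel asymptotics recalled there together with $|H^2_0(r_{\rm det}v_n)|=|Y_0(r_{\rm det}v_n)|\simeq(2/(\pi r_{\rm det}v_n))^{1/2}$, the factor of the $n$-th summand that does not involve $G_\sigma$ is bounded uniformly in $n$ (and locally uniformly in $r$), so convergence reduces to rapid decay of $\fourier_t\set{\fourier_z\set{G_\sigma}}(k,\sqrt{k^2+v_n^2})$ in $n$ and $k$; this decay follows from the smoothness and compact support of $f$, noting in particular that, since $F_\sigma(z,\cdot)$ vanishes near $r=r_{\rm det}$, the zero extension of $G_\sigma$ is $C^\infty$ across $t=0$ (all $t$-derivatives at $t=0$ are expressible through spatial derivatives of $F_\sigma$ at $r=r_{\rm det}$ via the wave equation (\ref{eq:wave2d})), so that $\fourier_t\set{\fourier_z\set{G_\sigma}}$ is rapidly decreasing. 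All remaining manipulations coincide with those in the proof of Theorem~\ref{ser:expbR}.
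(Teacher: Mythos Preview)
Your proof is correct and follows essentially the same route as the paper's: expand $\fourier_z\set{F_\sigma}(k,\cdot)$ in the Fourier--Bessel series (\ref{eq:fsera}), identify $F_\sigma=f_\sigma^0$ and $G_\sigma=g_\sigma^0$, invoke the $m=0$ case of (\ref{eq:xuxuwang}) to supply the coefficients, and apply Fourier inversion in $z$. The only difference is that you add justification for the non-vanishing of $H^2_0(r_{\rm det}v_n)$ and for the convergence of the series, points the paper defers to the remarks following the proof rather than including in it.
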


\begin{proof}
We use again the  Fourier-Bessel series (\ref{eq:fsera})  of proof of Theorem \ref{ser:expbR}.
Recalling the definitions of $F_\sigma$, $G_\sigma$
and the Fourier coefficients $f_\sigma^m$, $g_\sigma^m$
one notices that $F_\sigma = f_\sigma^0$, $G_\sigma = g_\sigma^0$.
Therefore (\ref{eq:xuxuwang}) for $m=0$ implies
\begin{multline}\label{eq:xuxuwang2}
    \hankel_r \set{\fourier_z \set{ F_\sigma  }}(k, v)
    =
     \frac{2}{\pi}  \frac{ \fourier_t \set{\fourier_z \set{G_\sigma}}
    ( k, \sqrt{k^2 + v^2 } )  }{H^{2}_0(
    r_{\rm det} v)  \sqrt{k^2+ v^2}} \,, \qquad (k, v) \in \R \times (0, \infty)
    \,.
\end{multline}
Inserting (\ref{eq:xuxuwang2}) in (\ref{eq:fsera}) and using the
Fourier inversion formula shows (\ref{eq:stabf2}).
\end{proof}

Equation (\ref{eq:xuxuwang2}) is quite similar to (\ref{invfo}).
However, in the denominator in (\ref{eq:xuxuwang2}) the zero order second kind
Hankel function appears (instead of the the zero order Bessel function) which cannot be zero
for a finite argument \cite{AbrSte72}.
Moreover, the asymptotic  expansion of the Bessel and the  second kind Hankel function
show that the summands in \req{stabf2} that do not depend on the data $G_\sigma$
remain bounded as $n \to \infty$.

\begin{remark}
The derivation of (\ref{eq:xuxuwang}) is based on the following
Green function expansion in cylindrical coordinates  \cite{XuXuWan02}
\begin{multline}\label{eq:green}
\frac{e^{-i \omega |\f\Phi_\sigma(z,r,\alpha)-\f\Phi_\sigma(z_0,r_{\rm det},\alpha_0) |}}
{|\f\Phi_\sigma(z,r,\alpha) - \f\Phi_\sigma(z_0,r_{\rm det},\alpha_0) |} \\
\\=
\frac{-i\pi}{2} \int _\R \left( \sum_{m \in\Z}
A_m( v r, v r_{\rm det}) e^{-im ( \alpha-\alpha_0)} \right)
e^{-i\omega ( z-z_0)} \, dz\,,
\end{multline}
with $v  = \sign(\omega) \sqrt{\abs{\omega^2- k^2}}$,
\begin{equation*}
A_m(vr, vr_{\rm det}) =
\left\{
  \begin{array}{ll}
    H_m^{(2)}(v r_{\rm det})  J_m( v r ) , & \text{ if } \omega^2 > k^2 \,, \\
    2i/\pi \, K_m( \abs{v} r_{\rm det} ) I_m( \abs{v} r )  , & \hbox{otherwise} \,,
  \end{array}
\right.
\end{equation*}
and $I_m$, $K_m$ denoting the $m$-th order modified Bessel functions
of first and second kind, respectively. Here its is assumed that $r_{\rm det} > r$.

Interchanging the roles of $r$ and $r_{\rm det}$ implies that for $r_{\rm det} > r$ the Green function
expansion \req{green} holds with
\begin{equation*}
A_m(vr, vr_{\rm det}) =
\left\{
  \begin{array}{ll}
    J_m( v r_{\rm det} ) H_m^{(2)}(v r)   , & \text{ if } \omega^2 > k^2 \,, \\
    2i/\pi \, I_m( \abs{v} r_{\rm det} ) K_m( \abs{v} r )   , & \hbox{otherwise} \,.
  \end{array}
\right.
\end{equation*}
Similar to the proof of \req{xuxuwang} in \cite{XuXuWan02} this leads to a  formula for reconstructing
$F_\sigma$ in the case $r_{\rm det} \leq 2R$, however again an unstable one with $J_0( v r_{\rm det} )$ in the denominator.
\end{remark}

\psfrag{K=4}{$K=4$}
\psfrag{K=100}{$K=100$}
\begin{figure}[tbh!]
\begin{center}
\includegraphics[width=0.45\textwidth]{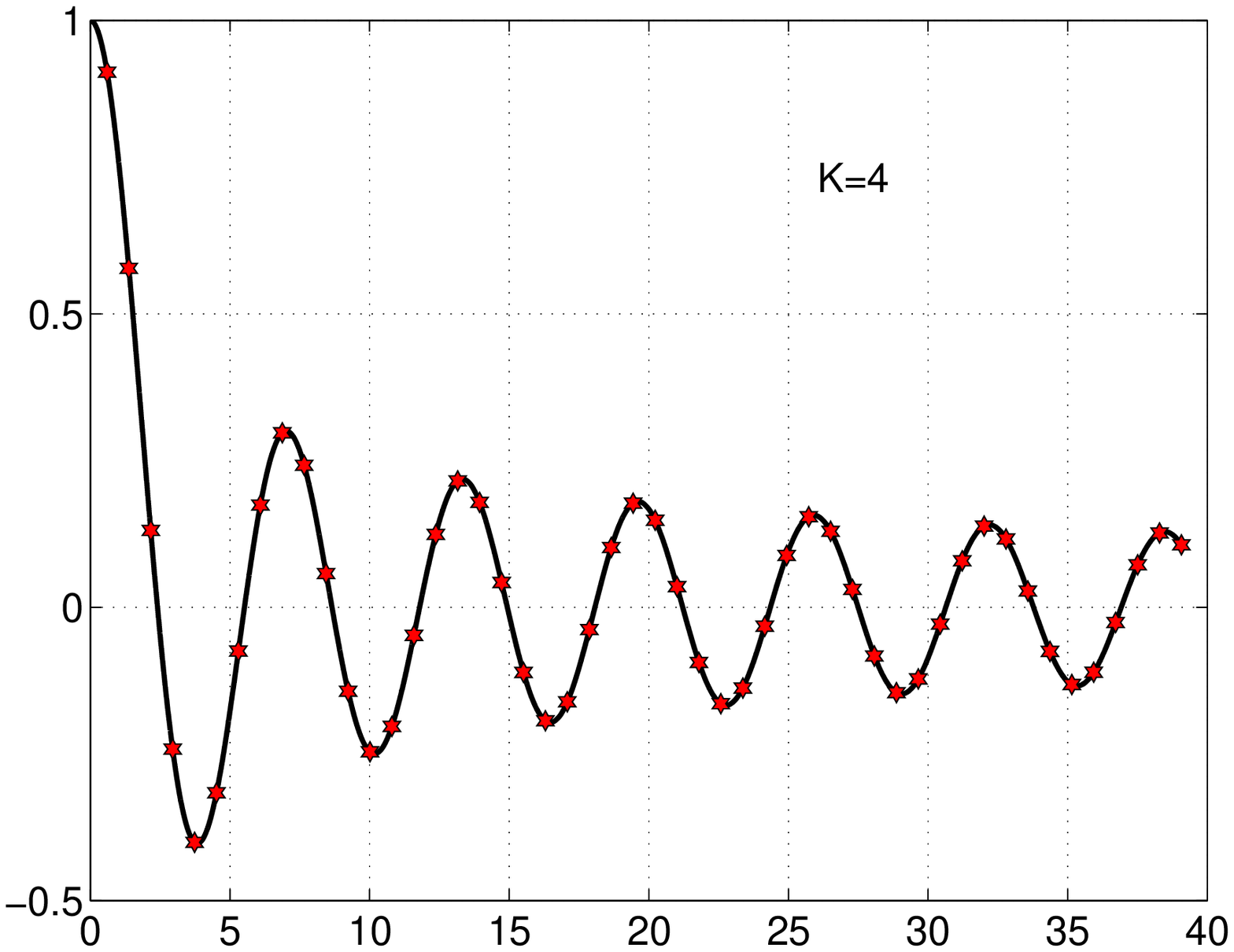} \qquad
\includegraphics[width=0.45\textwidth]{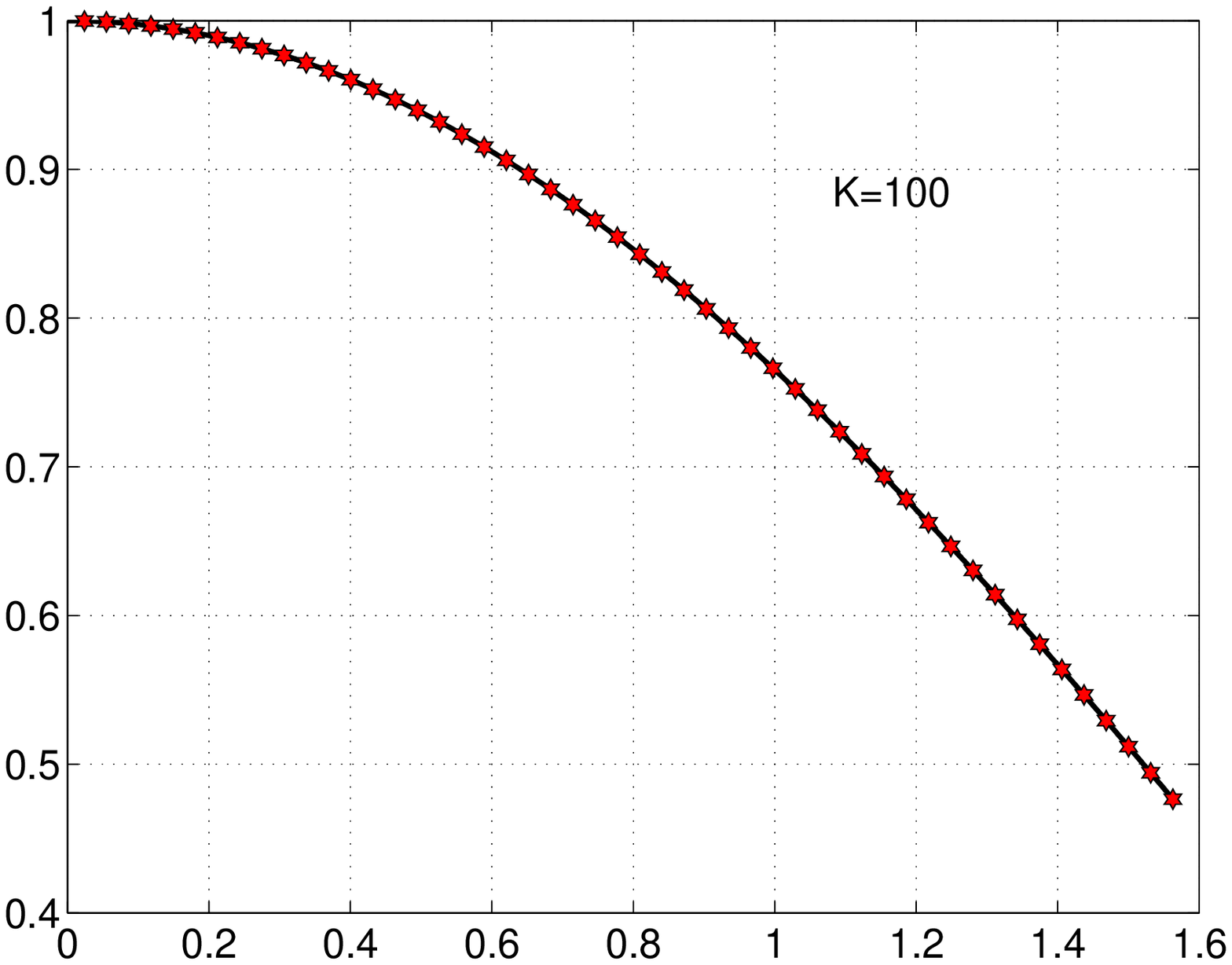}
\caption{The first $50$ denominators $J_0(r_{\rm det} v_n / K)$
in \req{fsera-2} for $K=4$ (left) and $K = 100$ (right).}\label{fig:smallR}
\end{center}
\end{figure}

\begin{remark}\label{r0is0}
Suppose that $r_{\rm det} < \Rrc$ and that $f$ is supported in
$B_{\Rrc-r_{\rm det}}(0) \times \R$.
For $r_{1} \geq 2\Rrc - r_{\rm det}$ let $(\tilde v_n)_{n \in \N}$ denote the zeros  of
$v \mapsto J_0( r_{1} \tilde v)$.
Then one can expand $\fourier_z \set { F_\sigma }(k, \cdot)$ in a Fourier Bessel series (see \cite{Sne72})
\begin{equation} \label{eq:sertilde}
\fourier_z \set{F_\sigma}(k,r)
=
\frac{2}{r_{1}^2} \sum_{n \in \mathbb{N}}
\hankel_r\set{\fourier_z\set{F_{\sigma} }}
(k, \tilde v_n)  \frac{J_0(r \tilde v_n)}{J_1( r_{1} \tilde v_n )^2}
\,, \qquad (k,r) \in \R \times (0, \infty) \,.
\end{equation}
According to (\ref{invfo}) we have
\begin{equation}\label{rel_data}
\hankel_r \set{ \fourier_z \set{ F_\sigma }}(k, v)
=
\frac{2}{\pi}
\frac{\cosine_t \set{ \fourier_z \set{ G_{\sigma} }} ( k,
\sqrt{k^2 + v^2 } )  }{J_0(r_{\rm det} v) \sqrt{k^2 + v^2 } }
\,\qquad v \not \in \set{v_n: n\in \N} \,.
\end{equation}
If we assume that $r_{1}$  is a integer multiple of
$r_{\rm det}$, i.e., $r_{1} = K r_{\rm det}$, then $\tilde v_n = v_n/K \not\in \set{v_m: m\in \N}$
for any $n \in \N$. Therefore, inserting (\ref{rel_data}) in \req{sertilde} yields
\begin{equation}\label{eq:fsera-2}
\fourier_z \set{F_\sigma}(k,r)
=
\frac{2}{\pi^2 r_{1}^2} \sum_{n \in \mathbb{N}}
\frac{\cosine_t \set{ \fourier_z \set{ G_{\sigma} }} ( k,
\sqrt{k^2 + \tilde v_n^2 } )  }{J_0(r_{\rm det} \tilde v_n) \sqrt{k^2 + \tilde v_n^2 } }
\frac{J_0(r \tilde v_n)}{J_1( r_{1} \tilde v_n )^2}
\end{equation}
In general, \req{fsera-2} is a again sensible to noise when
$\tilde v_n$ gets close to a zero of $J_0(r_{\rm det} v)$.

In the limiting case $r_{\rm det} \ll \Rrc$ and for $n$ not to large, the denominators $J_0(r_{1}\tilde v_n)$ are well bounded from below
(see right image in Figure \ref{fig:smallR}).  In this case, truncating \req{fsera-2} leads to a stable inversion formula.
In particular, for $r_{\rm det} = 0$ one obtains
\begin{equation}\label{eq:fsera-3}
\fourier_z \set{F_\sigma}(k,r)
=
\frac{2}{\pi^2 r_{1}^2} \sum_{n \in \mathbb{N}}
\frac{\cosine_t \set{ \fourier_z \set{ G_{\sigma} }} ( k,
\sqrt{k^2 + \tilde v_n^2 } ) }{\sqrt{k^2 + \tilde v_n^2 } }
\frac{J_0(r \tilde v_n)}{J_1( r_{1} \tilde v_n )^2}
\end{equation}
for any $r_{1} \geq 2\Rrc$. Together with \req{inv-finchi} this
provides a novel inversion formula for PAT using point-like
detectors on a cylindrical recording surface.
\end{remark}

\section{Numerical Experiments}

In practice one deals with discrete measurement data
\[
\Ge_{\lo} [\mo,\no]
:= G_{\sigma_{\lo}}( z_{\mo} ,t_{\no})
\,,
\qquad
(\lo, \mo, \no) \in \{1,\dots, \No_\sigma \}\times \{1,\dots, \No_z \} \times \{1,\dots, \No_t \} \,,
\]
where $G_{\sigma}$ is as in \req{data}, and where $\sigma_{\lo} = 2
\pi (\lo -1)/\No_{\sigma}$, $z_{\mo} =  H ( \mo-1)  / \No_z$ and
$t_{\no} = T (\no -1)/ \No_t$ are discrete samples of the angle,
height and time, respectively. Here $H > 0$ represents the finite
height of the stack of circular integrating detectors (see Figure
\ref{fig:1}) and  $T$ is such that $G_{\sigma}( z ,t) = 0$ for
$t\geq T$ and $z \in[0, H]$.

In this section we outline how to implement \req{stabf} and
\req{stabf2}  in order to find an approximation
\[
\Fe_{\lo} [\mo,\no]
\simeq
F_{\sigma_{\lo}}( z_{\mo} , r_{\no})
 \,,
\qquad
(\mo, \no) \in \{1,\dots, \No_z \} \times \{1,\dots, \No_r \} \,,
\]
with $r_{\no} = r_{\rm det} (\no -1)/ \No_r$.
Having calculated such an approximation,
one can reconstruct a discrete approximation to $f$ by applying the filtered back-projection algorithm
of \cite{FinHalRak07} for fixed $\mo$, see Remark \ref{rem:twostep}.

\smallskip
A numerical reconstruction method based on \req{stabf} is as follows:
\begin{itemize}
\item[(i)]
The discrete Fourier transform  (with respect to the first component)
of the data
\begin{equation} \label{eq:FFT}
\Feo \set{\Ge_\lo} [ \mo ,\no ]:= \sum_{\mo' =1}^{\No_z} \Ge_\lo[ \mo',\no]~
e^{-i 2 \pi \mo  (\mo'-1) / \No_z }
\end{equation}
with $(\mo, \no)  \in \{- \No_z/2 ,\dots , \No_z/2-1\} \times \{1 ,\dots , \No_t\}$,
is considered as an approximation to $\fourier \set{F_{\sigma_\lo}}(2\pi \mo/H, t_\no)$.
\item[(ii)]
The sine transform $\sine\set{t \fourier \set{F_{\sigma_\lo}}}$, evaluated at
\[
    \omega_{\mo, \no} = \sqrt{ (2 \pi \mo/H)^2 + v_{\no}^2 }\,,
    (\mo, \no) \in  \{- \No_z/2 ,\dots , \No_z/2-1\} \times\{0 ,\dots , \No_r-1  \} \,,
\]
is approximated by the trapezoidal rule, leading to
\begin{equation}\label{Sin_trafo}
\Seo \set{ \f t \Feo \set {\Ge_\lo }}[ \mo , \no  ]:= \sum_{\no' =1 }^{\No_t}
t_{\no'} \Feo \set { \Ge_\lo } [\mo, \no'] \sin \bigl(\omega_{\mo, \no} t_{\no'}  \bigr)\,.
\end{equation}
\item[(iii)] Finally, truncating the Fourier Bessel Series and approximating
the inverse Fourier transform with the trapezoidal rule leads to
discrete version of (\ref{eq:stabf}):
\begin{equation}\label{invfo_dis}
\Feo_\lo [\mo, \no]
:=
\frac{4 T}{\pi  r_{\rm det}^3 \No_t}
\sum_{\mo' = -\No_z}^{\No_z/2-1}
\sum_{\no'=0}^{ \No_r-1}  \frac{ v_{\no'} \Seo  \set{\f t \Feo
\set{ \Ge_\lo}}[\mo', \no']}{\omega_{\mo', \no'}^2 J_1(r_{\rm det} v_{\no'})^3 }
J_0(r_\no v_{\no'})
e^{-i 2 \pi \mo'  (\mo-1) / \No_z }\,,
\end{equation}
with $(\mo, \no) \in  \{- \No_z/2 ,\dots , \No_z/2-1\} \times\{0
,\dots , \No_r-1  \}$ in formula (\ref{invfo_dis})
\end{itemize}

\smallskip
A numerical reconstruction method using  \req{stabf} is be  obtained in
an analogous manner. In this case one replaces  (\ref{invfo_dis})  by
\begin{equation}\label{invfo_dis2}
\Feo_\lo [\mo, \no] := \frac{4 T}{\pi  r_{\rm det}^2 \No_t}
\sum_{\mo' = -\No_z}^{\No_z/2-1} \sum_{\no'=0}^{ \No_r-1}  \frac{
 \Feo  \set{ \Feo \set{ \Ge_\lo}}[\mo',
\no']}{\omega_{\mo', \no'} H_0^1(r_{\rm det} v_{\no'}) J_0(r_{\rm
det} v_{\no'})^2 } J_0(r_\no v_{\no'}) e^{-i 2 \pi \mo'  (\mo-1) /
\No_z }\,,
\end{equation}
which is the discrete analogue   of \req{stabf2}.

To give a rough estimate of the computational complexity for the
previous calculations let us assume $\No_z = \No_r = \No_t = \No_\sigma =: \No$ and that
the values of the sine function and the Bessel function are pre-computed and
stored in lookup tables. Then the evaluation \req{FFT}
needs $\mathcal O(\No^2 \log \No)$ floating point operations (FLOPS) whereas (\ref{Sin_trafo}) and
(\ref{invfo_dis}) require  $\mathcal O(\No^3)$ FLOPS.
The filtered back projection formula \req{inv-finchi} also requires
$\mathcal O(\No^3)$ FLOPS, see \cite{FinHalRak07}.
For three dimensional reconstruction \req{FFT}, (\ref{Sin_trafo}), (\ref{invfo_dis})
and the filtered back-projection formula have to be applied $\No$ times. Hence
the total number of FLOPS is estimated as
\begin{eqnarray}
\No_{\rm FLOPS} = \No \left( \mathcal O(\No^2 \log \No) + \mathcal O(\No^3) +  \mathcal O(\No^3)\right)  =  \mathcal O(\No^4) \,.
\end{eqnarray}
Note that three dimensional back-projection type formulas which use point measurement data
have complexity $\mathcal O(\No^5)$.

\begin{figure}[tbh!]
\begin{center}
\includegraphics[height=0.9\textwidth, width=0.3\textwidth]{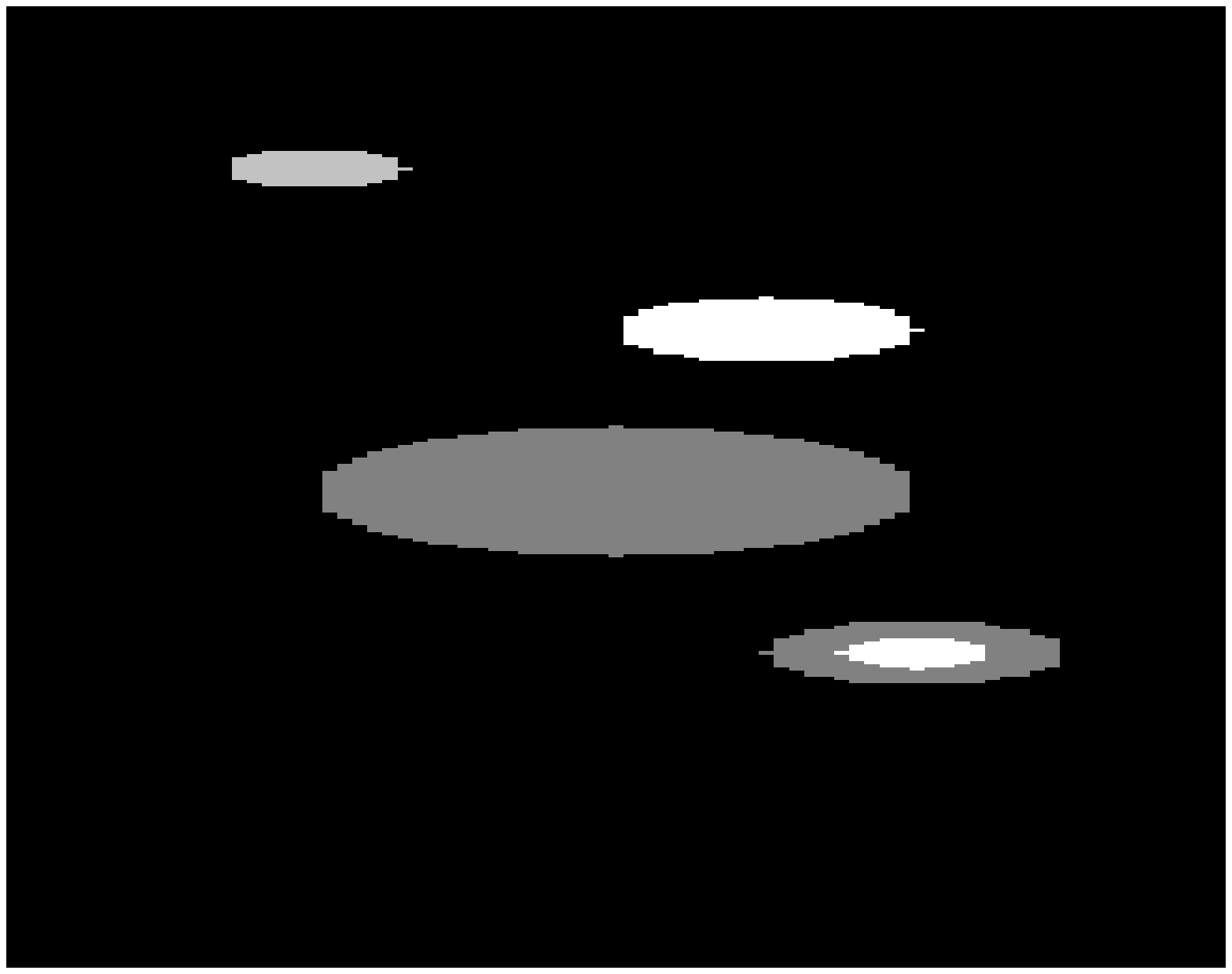}\hspace{0.1\textwidth}
\includegraphics[height=0.9\textwidth,width=0.3\textwidth]{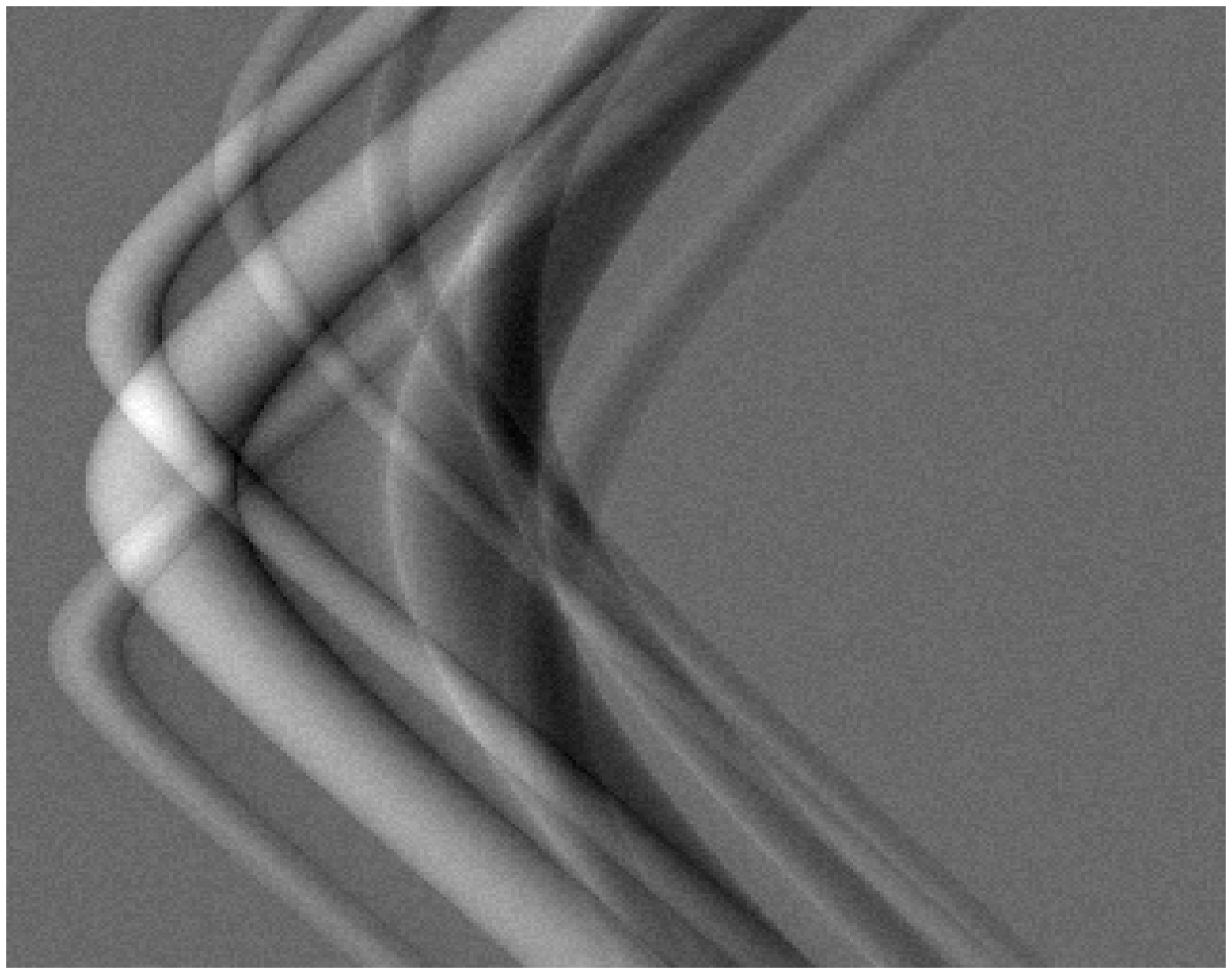}
\caption{Left: Cross section of five absorbing spheres ($z$ versus $r$). Right: The
measurement data with $10 \%$ Gaussian noise added ($z$ versus $t$).}\label{fig-phant}
\end{center}
\end{figure}

\begin{figure}[tbh!]
\begin{center}
\includegraphics[height =  0.9\textwidth,width=0.3\textwidth]{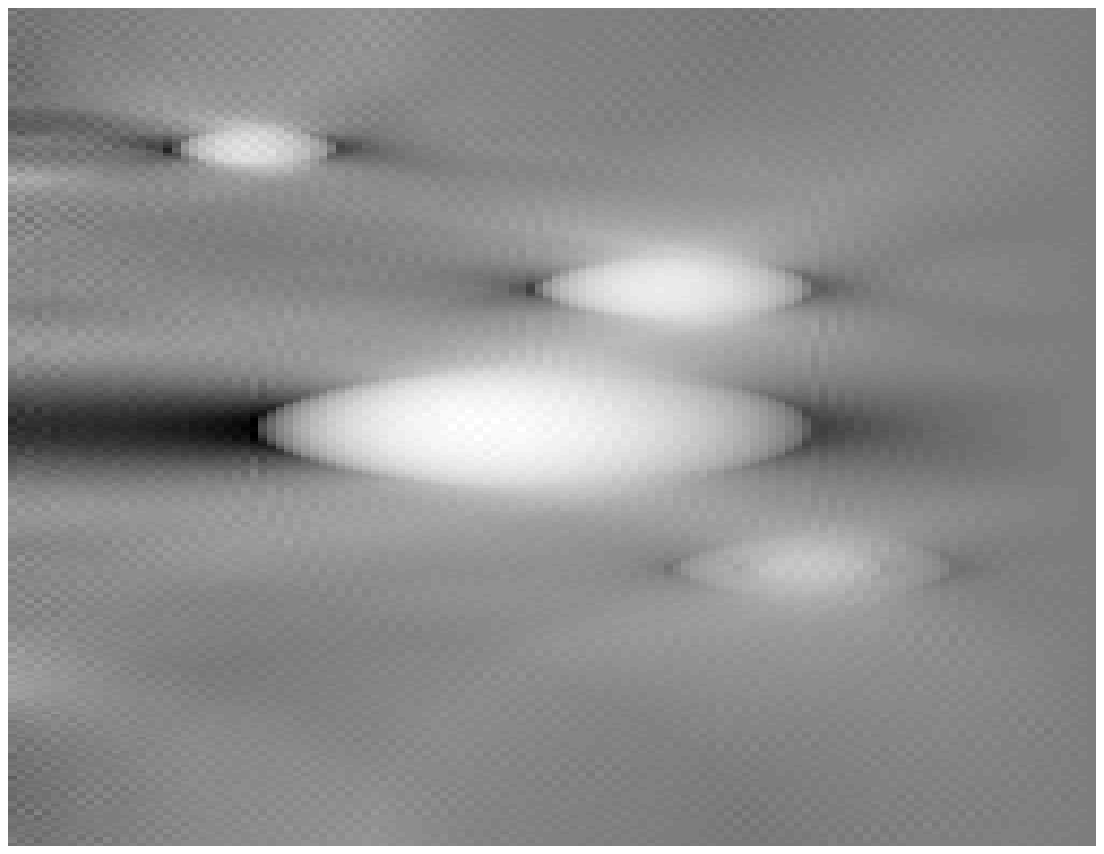}\hspace{0.1\textwidth}
\includegraphics[height =  0.9 \textwidth,width=0.3\textwidth]{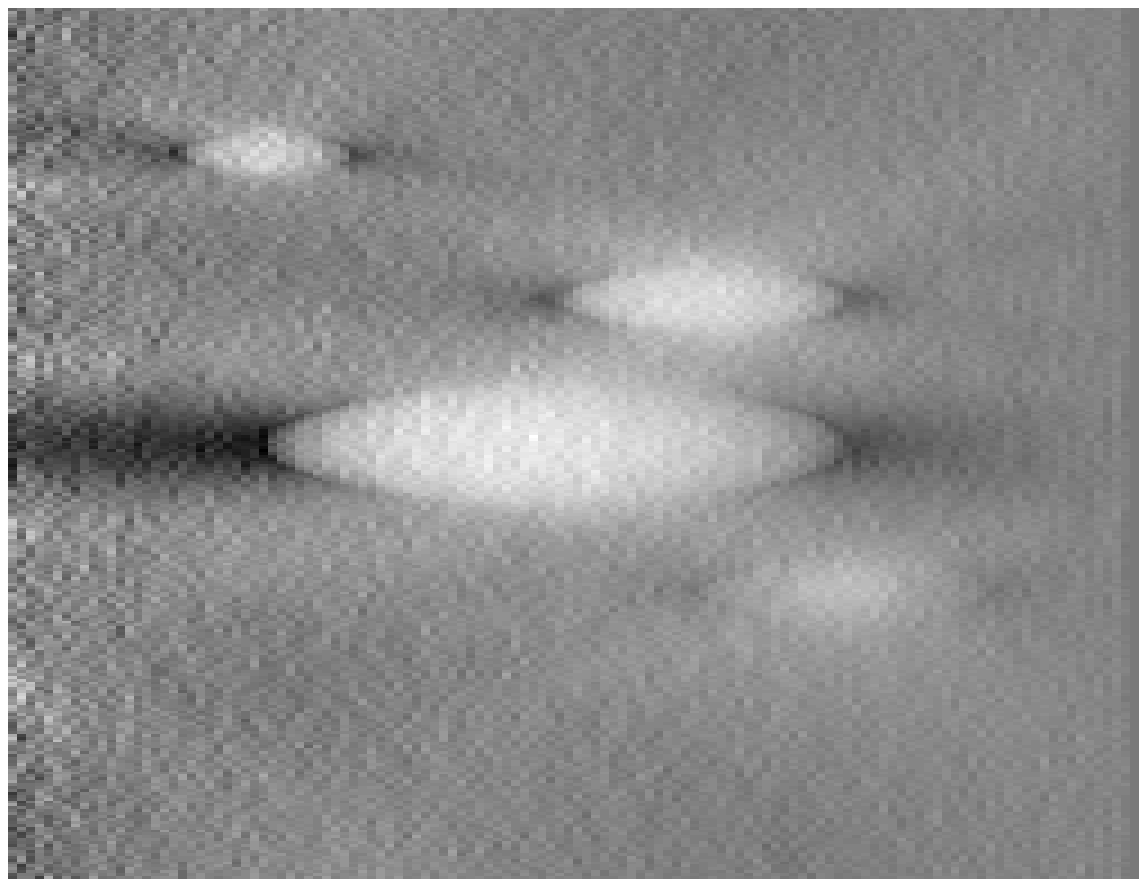}
\caption{Reconstruction with \req{stabf} from simulated (left) and noisy data (right). }\label{fig-rec}
\end{center}
\end{figure}

\begin{figure}[tbh!]
\begin{center}
\includegraphics[height =  0.9\textwidth,width=0.3\textwidth]{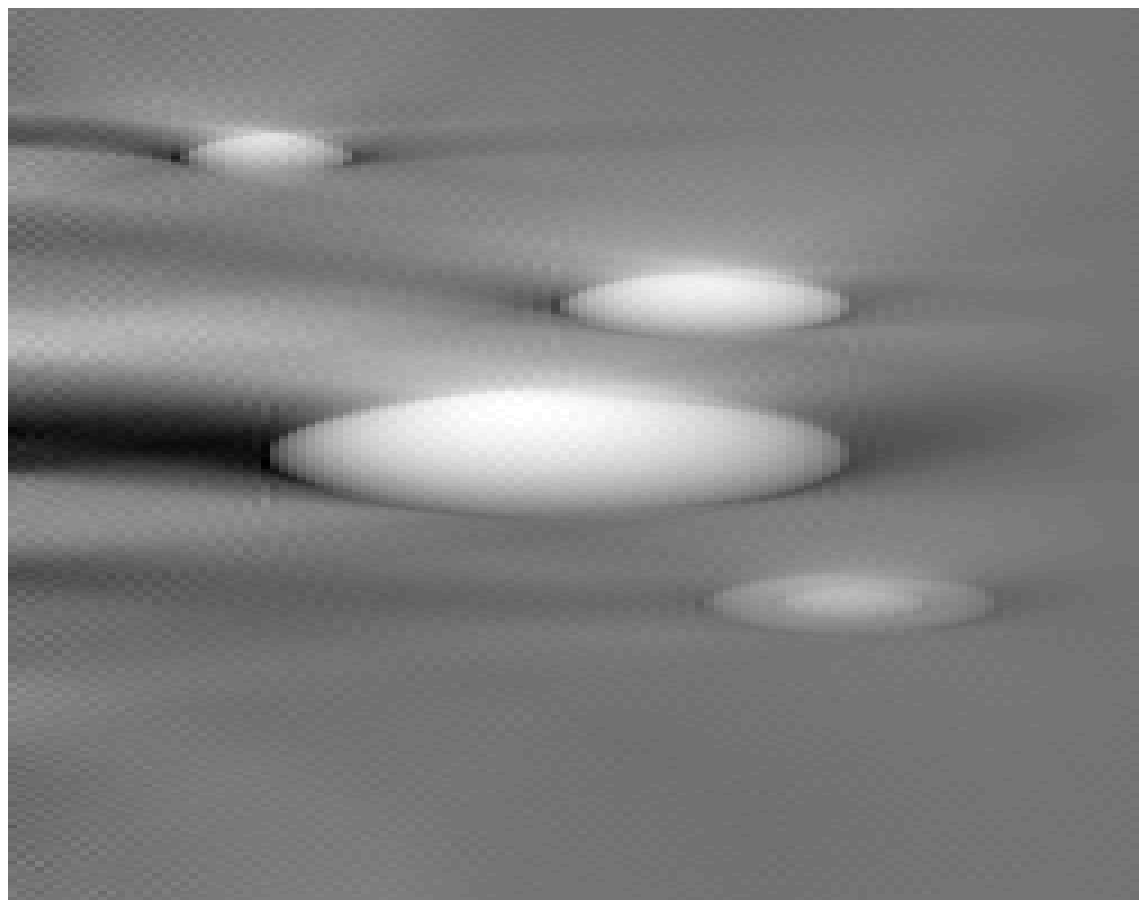}\hspace{0.1\textwidth}
\includegraphics[height =  0.9\textwidth,width=0.3\textwidth]{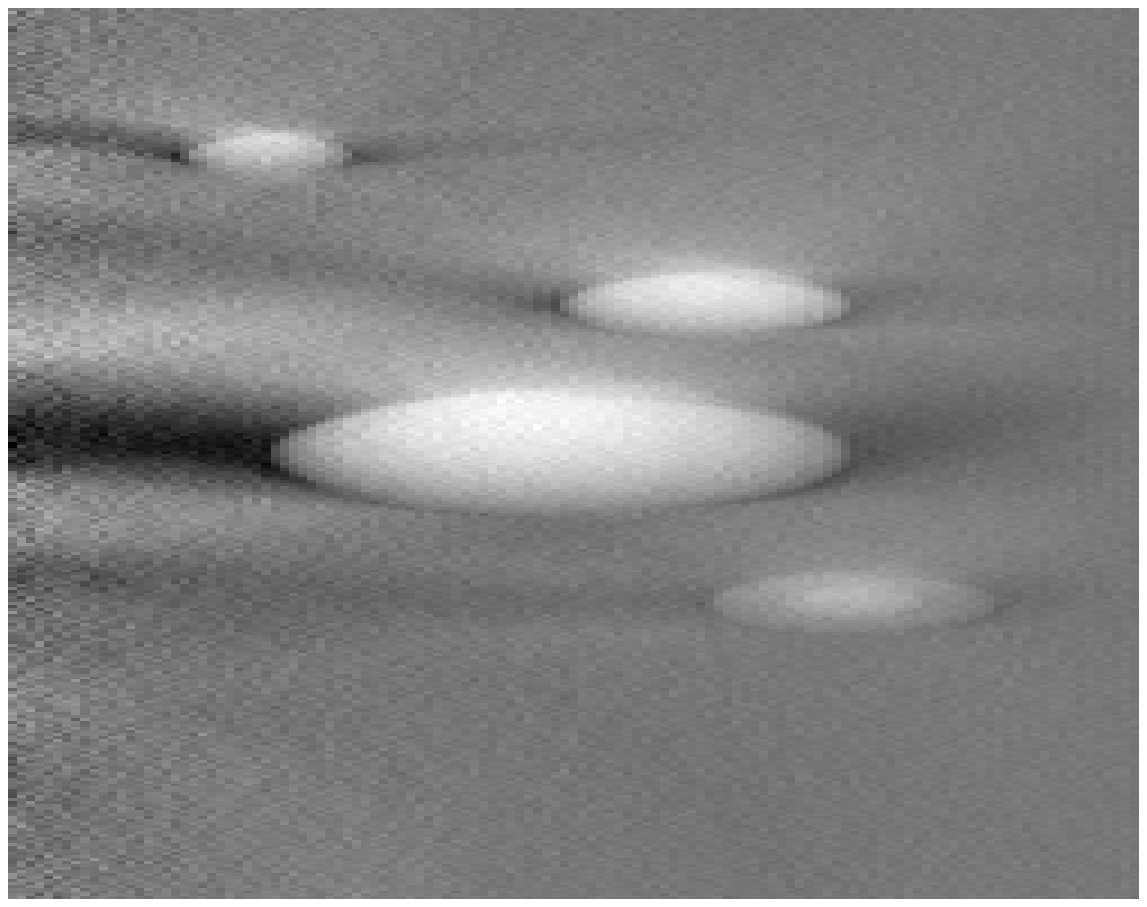}
\caption{Reconstruction with \req{stabf2} from simulated (left) and noisy data
(right). }\label{fig-rec2}
\end{center}
\end{figure}

%

In the following numerical experiments we take $\Rrc = 0.4$,
$H=3.75$ and $T=4$. The synthetic initial data $f$ is assumed to be
a superposition of radially symmetric objects around centers $\f
x_n$, i.e.,
\[
    f(\f x)
    =
    \sum_n f_n \bigl(\| \f x- \f x_n\|\bigr)
    \,, \qquad \f x \in \R^3\,.
\]
The acoustic pressure generated by a single radially symmetric
object at position $\f x$ and time $t$ is given by (see
\cite{HalSchuSch05})
\begin{eqnarray}\label{pres:spher}
    p_n(\f x,t)
    =
    \frac{ \norm{\f x -\f x_n }- t }{2\norm{\f x -\f x_n}}
    \, f_n
    \Bigl( \bigl| \norm{\f x - \f x_n} - t \bigr| \Bigr).
\end{eqnarray}
By the superposition principle the total pressure is
\begin{equation*}
p(\f x ,t )=\sum_{n=1}^N p_n(\f x, t )
\,, \qquad  (\f x, t) \in \R^3 \times (0, \infty)\,.
\end{equation*}
The measurement data $G_\sigma(z,t) = 1/(2\pi) \int_0^{2\pi} p(\Phi_\sigma( z, r_{\rm det}, \alpha),t ) d\alpha$,
see \req{Psigma}, (\ref{eq:data}), were generated by evaluating of (\ref{pres:spher}) followed by numerical
integration over $\alpha$. The radius $r_{\rm \det}$ of the circular integrating detectors is chosen to be
$2\Rrc$. In this case the stack of circular integrating detectors fully encloses the synthetic
initial data $f$, see right image in in Figure  \ref{fig:big+small}.

Figure \ref{fig-phant} shows a vertical cross section of the initial
pressure $f$ and the measurement data $G_\sigma$ where Gaussian
noise with a variance of $10\%$ of the maximal data valued is added.
The stack of circular integrating detectors is centered to the left
of the objects. The presented discrete implementation $\No_t=320$
measurements in time and $\No_z = 300$ in space. In both
reconstructions the value $\No_r$ was chosen to be $\No_r = 130$

The reconstructions of $F_\sigma$ with (\ref{invfo_dis}) from exact
and noisy data are depicted in Figure \ref{fig-rec} from formula
(\ref{invfo_dis}) and with formula (\ref{invfo_dis2}) in Figure
\ref{fig-rec2}. In the reconstructed images one notices some
blurred boundaries which are limited data artifacts \cite{LouQui00,
Qui93, XuYWanAmbKuc04} arising from the finite height of the stack
of circular integrating detectors. Moreover the images reconstructed
with (\ref{invfo_dis2}) are less sensitive to noise.

\section{Conclusion}
In this article a novel experimental buildup for PAT using circular
integrating detectors was proposed. For collecting measurement data
a fiber-based Mach-Zehnder or Fabry-Perot interferometer can be used
as an circular integrating detector. We showed that the 3D imaging
problem reduces to a series of 2D problems. This decomposition can
be used to reduce the operation count of derived reconstruction
algorithms. We derived two stable exact reconstruction formulas,  \req{stabf} and \req{stabf2},
for the case that the object is contained in the stack of detecting
circles.  In the case where the object is outside the detecting circles, a stable reconstruction
formula is obtained for the limiting case $r_{\rm det} \ll \Rrc$. As a byproduct, this leads to
a novel reconstruction formula \req{fsera-3} for PAT using point detectors on a cylindrical
surface.

\section*{Acknowledgement}
This work has been supported by the Austrian Science Foundation (FWF)
within the framework of the NFN ``Photoacoustic Imaging in Biology and Medicine'', Project S10505-N20. Moreover, the work of M. Haltmeier has been supported by the technology transfer office
of the University Innsbruck (transIT).

\def\cprime{$'$} \def\cprime{$'$} \def\cprime{$'$}


\begin{thebibliography}{10}

\bibitem{AbrSte72}
M.~Abramowitz and I.A. Stegun.
\newblock {\em Handbook of Mathematical Functions}.
\newblock Dover, New York, 1972.

\bibitem{AgrKucQui07}
M.~L. Agranovsky, K.~Kuchment, and E.~T. Quinto.
\newblock Range descriptions for the spherical mean {R}adon transform.
\newblock {\em J. Funct. Anal.}, 248(2):344--386, 2007.

\bibitem{BurHofPalHalSch05}
P.~Burgholzer, C.~Hofer, G.~Paltauf, M.~Haltmeier, and O.~Scherzer.
\newblock Thermo\-acoustic tomography with integrating area and line detectors.
\newblock {\em IEEE Trans. Ultrason., Ferroeletr., Freq. Control},
  52(9):1577--1583, 2005.

\bibitem{BurMatHalPal07}
P.~Burgholzer, G.~J. Matt, M.~Haltmeier, and G.~Paltauf.
\newblock Exact and approximate imaging methods for photoacoustic tomography
  using an arbitrary detection surface.
\newblock {\em Phys. Rev. E}, 75(4):046706, 2007.

\bibitem{ClaKli07}
C.~Clason and M.~Klibanov.
\newblock The quasi-reversibility method for thermoacoustic tomography in a
  heterogeneous medium.
\newblock {\em SIAM J. Sci. Comp.}, 2007.
\newblock accepted.

\bibitem{Dep07}
C.~Depeursinge, editor.
\newblock {\em Novel Optical Instrumentation for Biomedical Applications III},
  volume 6631 of {\em Proceedings of SPIE-OSA}, 2007.

\bibitem{EseLarLarDeyMotPro02}
R.~O. Esenaliev, I.~V. Larina, K.~V. Larin, D.~J. Deyo, M.~Motamedi, and D.~S.
  Prough.
\newblock Optoacoustic technique for noninvasive monitoring of blood
  oxygenation: a feasibility study.
\newblock {\em App. Opt.}, 41(22):4722--4731, 2002.

\bibitem{FinHalRak07}
D.~Finch, M.~Haltmeier, and Rakesh.
\newblock Inversion of spherical means and the wave equation in even
  dimensions.
\newblock {\em SIAM J. Appl. Math.}, 68(2):392--412, 2007.

\bibitem{FinRak07}
D.~Finch and Rakesh.
\newblock The spherical mean value operator with centers on a sphere.
\newblock {\em Inverse Probl.}, 23(6):37--49, 2007.

\bibitem{GruHalPalBur07}
H.~Gr{\"u}n, M.~Haltmeier, G.~Paltauf, and P.~Burgholzer.
\newblock Photoacoustic tomography using a fiber based {F}abry--{P}erot
  interferometer as an integrating line detector and image reconstruction by
  model-based time reversal method.
\newblock In {\em \cite{Dep07}}, 2007.

\bibitem{HalSchBurNusPal07}
M.~Haltmeier, O.~Scherzer, P.~Burgholzer, R.~Nuster, and G.~Paltauf.
\newblock Thermo\-acoustic tomography \& the circular {R}adon transform: Exact
  inversion formula.
\newblock {\em Math. Models Methods Appl. Sci.}, 17(4):635--655, 2007.

\bibitem{HalSchBurPal04}
M.~Haltmeier, O.~Scherzer, P.~Burgholzer, and G.~Paltauf.
\newblock Thermoacoustic imaging with large planar receivers.
\newblock {\em Inverse Probl.}, 20(5):1663--1673, 2004.

\bibitem{HalSchuSch05}
M.~Haltmeier, T.~Schuster, and O.~Scherzer.
\newblock Filtered backprojection for thermoacoustic computed tomography in
  spherical geometry.
\newblock {\em Math. Methods Appl. Sci.}, 28(16):1919--1937, 2005.

\bibitem{HriKucNgu08}
Y.~Hristova, P.~Kuchment, and L.~Nguyen.
\newblock Reconstruction and time reversal in thermoacoustic tomography in
  acoustically homogeneous and inhomogeneous media.
\newblock {\em Inverse Problems}, 24(5):055006 (25pp), 2008.

\bibitem{KolHonSteMul03}
R.~G.~M. Kolkman, E.~Hondebrink, W.~Steenbergen, and F.~F.~M. De~Mul.
\newblock In vivo photoacoustic imaging of blood vessels using an
  extreme-narrow aperture sensor.
\newblock {\em IEEE J. Sel. Topics Quantum Electron.}, 9(2):343--346, 2003.

\bibitem{KruMilReyKisReiKru00}
R.~A. Kruger, K.~D. Miller, H.~E. Reynolds, W.~L. Kiser, D.~R. Reinecke, and
  G.~A. Kruger.
\newblock Breast cancer in vivo: contrast enhancement with thermoacoustic {CT}
  at 434 {MH}z-feasibility study.
\newblock {\em Radiology}, 216(1):279--283, 2000.

\bibitem{KucKun08}
P.~Kuchment and L.~A. Kunyansky.
\newblock Mathematics of thermoacoustic and photoacoustic tomography.
\newblock {\em European J. Appl. Math.}, 19:191--224, 2008.

\bibitem{Kun07}
L.~A. Kunyansky.
\newblock Explicit inversion formulae for the spherical mean {R}adon transform.
\newblock {\em Inverse Probl.}, 23(1):373--383, 2007.

\bibitem{Kun07b}
L.~A. Kunyansky.
\newblock A series solution and a fast algorithm for the inversion of the
  spherical mean radon transform.
\newblock {\em Inverse Probl.}, 23(6):S11--S20, 2007.

\bibitem{LouQui00}
A.K. Louis and E.T. Quinto.
\newblock Local tomographic methods in sonar.
\newblock In {\em Surveys on solution methods for inverse problems}, pages
  147--154. Springer, Vienna, 2000.

\bibitem{ManKhaHesSteLee05}
S.~Manohar, A.~Kharine, J.~C.~G. van Hespen, W.~Steenbergen, and T.~G. van
  Leeuwen.
\newblock The twente photoacoustic mammoscope: system overview and performance.
\newblock {\em Physics in Medicine and Biology}, 50(11):2543--2557, 2005.

\bibitem{PalNusHalBur07}
G.~Paltauf, R.~Nuster, M.~Haltmeier, and P.~Burgholzer.
\newblock Experimental evaluation of reconstruction algorithms for limited view
  photoacoustic tomography with line detectors.
\newblock {\em Inverse Probl.}, 23(6):81--94, 2007.

\bibitem{PatSch07}
S.~K. Patch and O.~Scherzer.
\newblock Special section on photo- and thermoacoustic imaging.
\newblock {\em Inverse Probl.}, 23:S1--S122, 2007.

\bibitem{Qui93}
E.~T. Quinto.
\newblock Singularities of the {X}-ray transform and limited data tomography in
  {${\bf R}\sp 2$} and {${\bf R}\sp 3$}.
\newblock {\em SIAM Journal on Mathematical Analysis}, 24(5):1215--1225, 1993.

\bibitem{SchGraGroHalLen08}
O.~Scherzer, M.~Grasmair, H.~Grossauer, M.~Haltmeier, and F.~Lenzen.
\newblock {\em Variational Methods in Imaging}, volume 167 of {\em Applied
  Mathematical Sciences}.
\newblock Springer, New York, 2008.

\bibitem{Sne72}
I.~N. Sneddon.
\newblock {\em The Use of Integral Transforms}.
\newblock McGraw-Hill, New York, 1972.

\bibitem{XuWan03}
M.~Xu and L.~V. Wang.
\newblock Analytic explanation of spatial resolution related to bandwidth and
  detector aperture size in thermoacoustic or photoacoustic reconstruction.
\newblock {\em Phys. Rev. E}, 67(5):0566051--05660515 (electronic), 2003.

\bibitem{XuWan06}
M.~Xu and L.~V. Wang.
\newblock Photoacoustic imaging in biomedicine.
\newblock {\em Rev. Sci. Instruments}, 77(4):041101, 2006.

\bibitem{XuYWanAmbKuc04}
Y.~Xu, L.~V. Wang, G.~Ambartsoumian, and P.~Kuchment.
\newblock Reconstructions in limited-view thermoacoustic tomography.
\newblock {\em Med. Phys.}, 31(4):724--733, 2004.

\bibitem{XuXuWan02}
Y.~Xu, M.~Xu, and L.~V. Wang.
\newblock Exact frequency-domain reconstruction for thermoacoustic
  tomography--{II}: Cylindrical geometry.
\newblock {\em IEEE Trans. Med. Imag.}, 21:829--833, 2002.

\bibitem{YanWan07}
X.~Yang and L.~V. Wang.
\newblock Ring-based ultrasonic virtual point detector with applications to
  photoacoustic tomography.
\newblock {\em Applies Physics Letters}, 90(25):251103, 2007.

\bibitem{ZanSchHal09}
G.~Zangerl, M.~Haltmeier, and O.~Scherzer.
\newblock Circular integrating detectors in photo and thermoacoustic
  tomography.
\newblock {\em Inverse Probl. Sci. Eng.}, 17(1):133--142, 2009.

\end{thebibliography}
\end{document}